\documentclass[10pt,a4paper]{article}
\usepackage{newunicodechar}
\newunicodechar{̈}{\"{}}
\usepackage[backend=biber,
    style=alphabetic, sorting=nyt,
    maxalphanames=10, maxbibnames=10]{biblatex}
\usepackage{amsmath}
\usepackage{amssymb}
\usepackage{amsthm}
\usepackage{authblk}
\usepackage{booktabs}
\usepackage{dcolumn}
\usepackage{fancyhdr}
\usepackage{geometry}
\usepackage{graphicx}
\usepackage{latexsym}
\usepackage{xcolor}
\usepackage{hyperref}
\hypersetup{
  colorlinks,
  citecolor=purple,
  linkcolor=blue,
  urlcolor=green}

\usepackage{graphicx}
\usepackage{siunitx}
\usepackage{tipa}
\usepackage{stackengine}
\usepackage{mathrsfs}
\usepackage{enumitem}
\usepackage{breqn}
\usepackage{dirtytalk}
\usepackage{pxfonts}
\usepackage{lineno}
\usepackage{comment}
\newtheorem{theorem}{Theorem}[section]
\newtheorem{lemma}[theorem]{Lemma}
\newtheorem{proposition}[theorem]{Proposition}
\newtheorem{corollary}[theorem]{Corollary}
\newtheorem*{repeatedtheorem}{\repeatedtheoremname}
\newcommand{\repeatedtheoremname}{}

\numberwithin{equation}{section}

\theoremstyle{definition}
\newtheorem{remark}[theorem]{Remark}
\newtheorem{definition}{Definition}[section]
\newtheorem{example}[theorem]{Example}

\newcommand{\F}{\mathscr{F}}
\newcommand{\R}{\mathbb{R}}
\newcommand{\vart}{\vartheta}
\newcommand{\eps}{\varepsilon}

\title{$\lambda$ -- geodesic convexity of the squared\\ Sliced Wasserstein Distance}

\author{Giacomo~Cozzi\thanks{\scriptsize Dipartimento di Matematica ``Tullio Levi-Civita'', Università degli Studi di Padova, Via Trieste, 63, 35131 Padova PD, Italy. {\tt cozzi@math.unipd.it}},  Filippo~Santambrogio\thanks{\scriptsize 
Universit\'e Lyon 1, Ecole Centrale de Lyon, INSA Lyon, Universit\'e Jean Monnet, CNRS, ICJ, UMR 5208, Villeurbanne, France,
\texttt{santambrogio@math.univ-lyon1.fr}}}

\begin{document}
\maketitle
\begin{abstract}
{The squared Wasserstein distance with respect to a fixed target measure is not a convex function along Wasserstein geodesics over $\R^d$ when $d\geq 2$. Still, in dimension $d=1$, this function is $2$-geodesically convex. We prove that the square of the sliced Wasserstein distance (which is a distance constructed averaging one-dimensional Wasserstein distances) is $\lambda$-geodesically convex (possibly with $\lambda \leq 0$) on sets of measures satisfying upper bounds on their density, when the target measure satisfies some given geometric conditions. We also present some consequences of this result in the study of the sliced Wasserstein flow.
}
\end{abstract}

\tableofcontents

\section{Introduction and main results}
Optimal transport is one of the richest fields in analysis, thanks to which many remarkable results in different areas of mathematics were achieved in the last decades.
The Monge problem, first formulated in his \say{Mémoire sur la Théorie des Déblais et des Remblais}, aimed at minimizing the average displacement needed for moving a certain distribution of mass (source) to a different prescribed configuration (target): given two probability densities $\rho, \nu \in \mathcal{P}(\R^d)$, a Borel map $T\colon \R^d \to \R^d$ is said to be a \textit{transport map} from $\rho$ to $\nu$ if the nonlinear constraint \begin{equation*}
    \nu(A)=\rho(T^{-1}(A))
\end{equation*}
is satisfied for any Borel set $A$. In this case we write $T_{\#}\rho=\nu$, and we say $T$ is the \say{optimal transport map} for the Monge problem with source $\rho$ and target $\nu$, provided
\begin{equation*}
    \int_{\R^d}c(T(x),x)\,d\rho(x)=\inf_{S_{\#}\rho=\nu}\int_{\R^d}c(S(x),x)\,d\rho(x),
\end{equation*}
where $c(x,y)$ is a positive cost function. When $c(x,y)=|x-y|^p$, with $p\geq 1$, the $p-$th root of the infimum above defines a distance between probability measures, called the $p$-\textit{Wasserstein distance}, and denoted by $W_p$.

Wasserstein distances enjoy many properties that make them a useful tool in theoretical research, as they metrize the space of probability measures with finite $p$-moments and provide it of a geodesic space structure. At the same time, they are widely used in applications, being one of the most common choices as a cost for machine learning. However, computing Wasserstein distances in dimensions $d\geq 2$ can be a computationally expensive task. This is the reason why many alternatives to the Wasserstein distance have been introduced in the literature. 

One of them is the following: let $\rho$ and $\nu$ be probability measures in $\mathbb{R}^d$, both with finite second moments. 
For any unit direction $\vartheta \in \mathbb{S}^{d-1}$ we can define $\rho^\vartheta=(\pi^\vartheta)_{\#}\rho$ and $\nu^\vartheta=(\pi^{\vartheta})_{\#}\nu$, where $\pi^{\vartheta} \colon \mathbb{R}^{d} \to \mathbb{R}$ is the canonical projection on the line $\vartheta \mathbb{R}$ and the symbol $\#$ indicates the push-forward of the measure. 
We define 
\begin{equation}
\label{SW distance}
    SW_2(\rho,\nu)\coloneqq \left(\fint W^2_2(\rho^\vartheta,\nu^\vartheta)\,d\sigma(\vartheta)\right)^{\frac{1}{2}}.
\end{equation}
Here $\fint$ stands for the averaged integral, i.e., $\fint_E dm=\frac{1}{m(E)} \int_E dm$, and $d\sigma$ is the surface measure on the sphere. We reserve $\sigma$ for this $(d-1)$-dimensional measure and write $\mathcal H^k$ for the $k$-dimensional Hausdorff measure on other integration domains.
It is well known that \eqref{SW distance} defines a distance over $\mathcal{P}_2(\R^d)$, the space of probability measures with finite second moments on $\R^d$, called \textit{sliced Wasserstein distance}: this is comparable with the usual Wasserstein distance $W_2$, but it presents many advantages for applications, since it relies only on one-dimensional transport constructions \cite{kolouri2019generalized, Nadjahi2020StatisticalAT, ParSle, rustamov2023intrinsic}. 

Fix a target measure $\nu \in \mathcal{P}_2(\R^d)$. The main result of the present paper concerns the convexity of the functional 
\begin{equation*}
    \mathscr{F}_\nu(\rho)=\frac{1}{2}SW_2^2(\rho,\nu)
\end{equation*}
along Wasserstein geodesics, under certain assumptions on $\nu$. 

Here is an informal statement of the main result, which will then be presented as Theorem \ref{Main theorem}:
\renewcommand{\repeatedtheoremname}{Theorem \ref{Main theorem}}
\begin{repeatedtheorem}
    Let $M>0$, $K=\{\rho \in \mathscr{P}_2(\R^d)\colon \|\rho\|_{L^\infty}\leq M\}$, and $\nu \in \mathscr{P}_2(\R^d)$. 
    Assume that $\nu$ satisfies the technical assumption \eqref{assumption nu}. 
%
For any $\rho_0,\rho_1 \in K$, let $\rho_t$ be the constant-speed Wasserstein geodesic connecting them. Then, it holds
\begin{equation*}
    \mathscr{F}_\nu(\rho_t)\leq(1-t)\mathscr{F}_\nu(\rho_0)+t\mathscr{F}_\nu(\rho_1)-\frac{\lambda}{2}t(1-t)W^2_2(\rho_0,\rho_1),
\end{equation*}
with $$\lambda=\frac{1}{d}-cM^{\frac{1}{d}},$$
where $c$ is a constant only depending on $\nu$ and not on $\rho_0,\rho_1,\in K$.
\end{repeatedtheorem}
This is in contrast with what happens for the Wasserstein distance $W_2$ which, rather, is in general not expected to be convex along geodesics when $d \geq 2$ \cite{ambrosio2005gradient}.
We stress here that, in general, $\lambda$ is not expected to be positive and the convexity degenerates (i.e., $\lambda$ goes to $-\infty$) as the $L^\infty$ bound $M$ for the set $K$ increases, as well as when we change the constants appearing in the precise formulation of the assumptions needed on $\nu$, namely in equation \eqref{assumption nu}. 

An interesting application of this result concerns the study of the 
\textit{sliced Wasserstein flow} (SWF), which
is the isotropic and continuous-in-time version of the \textit{Iterative Distribution Transfer} (IDT), first proposed in \cite{pitie2007automated}. This is an algorithm based on a sequence of one-dimensional optimal matchings between projections of two given measures along different axes, chosen randomly at each time step. 

More precisely: fix a time step $\tau>0$. Starting from a probability density $\rho_0$, choose randomly an orthonormal basis $\{e_{1},\ldots,e_{d}\}$, and consider the projections $(\pi^{e_1}_{\#}\rho_0,\ldots,\pi^{e_d}_{\#}\rho_0)=:(\rho_0^{1},\ldots,\rho_0^{d})$. Let $(T^1,\ldots,T^d)$ be the optimal transport map (for the quadratic cost) between $(\rho_0^1,\ldots,\rho_0^d)$ and $(\pi^{e_1}_{\#}\nu,\ldots,\pi^{e_d}_{\#}\nu)=:(\nu_1,\ldots,\nu_d)$. We now want to move each point $x\in \text{spt }\rho_0$ in the direction $e_i$ of a displacement given by the displacement of the map $T^{i}$ at the point $x \cdot e_i$, and combine these displacements for all the elements of the randomly chosen basis. To this aim, consider $v^{IDT}(x):=\sum_{i=1}^d (T^i(x\cdot e_i)-x\cdot e_i)e_i$ and define $\rho^\tau_1 =(\mathrm{id}+\tau v)_{\#}\rho_0$. Heuristically, one expects $\rho^\tau_1$ to be \say{closer} than $\rho_0$ to $\nu$. Finally, we iterate the construction, obtaining a sequence $\{\rho^\tau_{k}\}_{k \ge 0}$. When $k\to +\infty$, one expects (or hopes) to recover the target probability $\nu$.

Starting from this, Bernot proposed the sliced Wasserstein flow operating the following two modifications: first, instead of choosing a random orthonormal basis, at each iteration, we consider the collection of all projections $\{\rho^{\vart}_0\}_{\vart \in \mathbb{S}^{d-1}}$ and  $\{\nu^{\vart}\}_{\vart \in \mathbb{S}^{d-1}}$. Denoting by $T^\vart$ the optimal transport map between $\rho_0^\vart$ and $\nu^\vart$, we define the isotropic version of the velocity field $v^{IDT}$. Namely, we compute
\begin{equation}
\label{velocity field}
v[\rho_0](x):= \fint_{\mathbb{S}^{d-1}} (T^\vart(x\cdot \vart)-x\cdot \vart)\vart\, d\sigma(\vart),
\end{equation}and define $\rho^\tau_1:= (\mathrm{id}+\tau v(x))_{\#}\rho_0$. We iterate the construction obtaining a sequence $\{\rho^\tau_k\}_{k\ge 0}$. The second difference from the IDT algorithm consists in considering the continuous-in-time analog of this construction, that is, we take the limit as $\tau \to 0^+$ of the aforementioned sequence. Thus, we obtain an absolutely continuous curve $(\rho_t)_{t\geq 0}$ that solves 
\begin{equation}
\label{SWF eq rho_0}
    \begin{cases}
        \partial_t \rho +\nabla \cdot (\rho \,v[\rho])=0,\\
        \rho(0)=\rho_0,
    \end{cases}
\end{equation}

This flow is expected to be an evolution which lets $\rho_t$ approach the target $\nu$, but it only relies on one-dimensional constructions, avoiding the growth of computational costs when the dimension $d$ of the space is large (curse of dimensionality). 

In Section \ref{S: Sliced wasserstein distance and sliced wasserstein flow}  we will show that the sliced Wasserstein flow with target measure $\nu$ corresponds to the Wasserstein gradient descent for the functional $\F_\nu$. This fact is relevant since it allows to apply the theory of Wasserstein gradient flows to the study of the SWF. In particular, the existence of solutions to the SWF was proved by Bonnotte in his PhD thesis \cite{bonnotte2013unidimensional} by JKO methods \cite{SantSurvey}. In the same work he also proved bounds for the $L^p $ norm along the flow (see Theorem \ref{stime Bonnotte Lp} below) and addressed some questions regarding the long time asymptotics and uniqueness of the flow. 
Regarding the first of these two questions, we can mention \cite{cozzi2025long}, where the authors proved that, if $\nu$ is the Gaussian measure, the flow converges to $\nu$. Other results in this directions have been obtained in \cite{vauthier2025towards}. 
For what concerns uniqueness of solutions, as a byproduct of Theorem \ref{Main theorem}, in this paper we are able to obtain the following
\begin{corollary}
\label{uniqueness corollary}
    If $\nu \in \mathscr{P}_2$ satisfies \eqref{assumption nu} and $\rho_0 \in \mathscr{P}_2(\R^d)\cap L^\infty(\R^d)$, then the sliced Wasserstein flow is well-posed in $L^\infty(\R^d)$, i.e., there exists a unique solution $\{\rho_t\}_{t\geq 0}$ such that $t\mapsto ||\rho_t||_{L^\infty}$ is locally bounded. 
\end{corollary}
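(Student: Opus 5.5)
Existence will come from Bonnotte's JKO construction together with the $L^p$ bounds he proved (Theorem \ref{stime Bonnotte Lp}). Uniqueness will come from the $\lambda$-convexity of Theorem \ref{Main theorem}, used through the standard EVI/contraction argument for Wasserstein gradient flows of $\lambda$-convex functionals. The key observation is that the convexity constant $\lambda=\frac1d-cM^{1/d}$ only deteriorates, and never disappears, as the $L^\infty$ bound grows. So on any time interval $[0,T]$ where two solutions both stay in $K_M=\{\|\rho\|_{L^\infty}\le M\}$, the functional $\F_\nu$ is $\lambda_M$-convex along every geodesic joining them. There is no difficulty about the geodesics leaving $K_M$: by displacement convexity of $\rho\mapsto\|\rho\|_{L^\infty}$ (a McCann-type estimate), the geodesic between two densities bounded by $M$ stays bounded by $M$.

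\textbf{Step 1 (existence).} Take the JKO scheme for $\F_\nu$ with initial datum $\rho_0\in\mathscr P_2\cap L^\infty$. Using Section \ref{S: Sliced wasserstein distance and sliced wasserstein flow}, identify its limit as a solution of \eqref{SWF eq rho_0} with velocity $v[\rho]=-\nabla_W\F_\nu(\rho)$. Letting $p\to\infty$ in Bonnotte's estimate $\|\rho_t\|_{L^p}\le e^{Ct}\|\rho_0\|_{L^p}$ gives $\|\rho_t\|_{L^\infty}\le e^{Ct}\|\rho_0\|_{L^\infty}$, which is the required local boundedness.

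\textbf{Step 2 (uniqueness).} Let $\rho_t,\tilde\rho_t$ be two solutions with locally bounded $L^\infty$ norms, and fix $T$ and $M$ with both bounded by $M$ on $[0,T]$. Both curves are absolutely continuous in $W_2$. Their velocity fields $v[\rho_t]$ are bounded: $|v|\le\fint|T^\vart(x\cdot\vart)-x\cdot\vart|$, and this is controlled using moments. By the definition of the velocity, $v[\rho_t]$ is (minus) an element of the Wasserstein subdifferential of $\F_\nu$ at $\rho_t$. I would verify this by differentiating $\frac12W_2^2(\rho^\vart,\nu^\vart)$ in each direction and averaging. Next, compute
$$\frac{d}{dt}\frac12W_2^2(\rho_t,\tilde\rho_t)=\int (x-y)\cdot\big(v[\rho_t](x)-v[\tilde\rho_t](y)\big)\,d\gamma_t(x,y)$$
with $\gamma_t$ optimal, using the standard formula for the derivative of $W_2^2$ along a.c. curves (Ambrosio--Gigli--Savaré). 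The $\lambda_M$-convexity of $\F_\nu$ on $K_M$, together with the subdifferential characterization along the geodesic joining $\rho_t$ and $\tilde\rho_t$, gives monotonicity of the subdifferential:
$$\int (x-y)\cdot(\xi(x)-\eta(y))\,d\gamma\ge\lambda_M W_2^2 .$$
Here $\xi,\eta$ are subgradients at the two endpoints. This is obtained by adding the two "above-tangent" inequalities that $\lambda$-convexity yields along the geodesic and its reverse. Hence
$$\frac{d}{dt}W_2^2(\rho_t,\tilde\rho_t)\le-2\lambda_M W_2^2(\rho_t,\tilde\rho_t),$$
and Grönwall gives $W_2(\rho_t,\tilde\rho_t)\le e^{-\lambda_M t}W_2(\rho_0,\tilde\rho_0)=0$ on $[0,T]$. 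Since $T$ is arbitrary, the two solutions coincide.

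\textbf{Main obstacle.} The delicate point is justifying the subgradient inequality $\F_\nu(\mu)\ge\F_\nu(\rho)-\int v[\rho]\cdot(T_\rho^\mu-\mathrm{id})\,d\rho+\frac{\lambda_M}2W_2^2(\rho,\mu)$ for $\rho,\mu\in K_M$. Convexity is only known along geodesics inside $K_M$, so it must be used in that restricted form. First I would derive it from Theorem \ref{Main theorem} by dividing by $t$ and letting $t\to0$. This requires the directional derivative of $\F_\nu$ along the geodesic at $t=0$ to equal $-\int v[\rho]\cdot(T-\mathrm{id})\,d\rho$. I would establish this slice by slice via the one-dimensional first variation of $W_2^2$ (possible since $\rho^\vart$ is absolutely continuous) and then apply dominated convergence in $\vart$, with second moments supplying the domination.
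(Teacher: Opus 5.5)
Your proposal is correct and follows the same route as the paper: existence and local $L^\infty$ bounds from Bonnotte, $\lambda_M$-convexity of $\F_\nu$ on the geodesically convex set $K_M$, the derivative formula for $W_2^2$ along the two flows, the monotonicity estimate for $\lambda$-convex functionals, and Gr\"onwall. The only difference is that you rederive the monotonicity inequality, by summing the two above-tangent inequalities from convexity restricted to $K_M$, whereas the paper simply cites Lemma 4.12 of the survey. One minor correction: $v[\rho_t]$ is not bounded but has linear growth, $|v[\rho](x)|\le |x|+C_\nu$ because $\nu$ is compactly supported, and this $L^2(\rho_t)$ control is all your argument needs.
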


This is just an example of the consequences of our $\lambda-$geodesic convexity result. Among others, we cite the fact that it also implies the convergence at a rate $\tau$ in $W_2$ distance of the corresponding JKO scheme to the continuous solution (from the theory developed in \cite{ambrosio2005gradient}) and the fact that the functional $SW_2^2$ can then be used in the flow interchange technique (see \cite{matmccsav}), but we do not develop these directions in details.

The paper is structured as follows: in Section \ref{preliminaries}, we report the main results needed from the theory of optimal transport, as well as some results related to the sliced Wasserstein distance and the sliced Wasserstein flow. In Section \ref{S: from Lip to Conv}, we will show how to obtain the geodesic convexity of $\mathscr{F}_\nu$, once Lipschitz bounds on the velocity field $v$ are known. In Section \ref{S: Lip bounds}, then, we will derive these very Lipschitz bounds, first on the class of probability measures with bounds in $L^p$ under very strong assumptions on $\nu$. Then, we will state and comment the main result, Theorem \ref{Main theorem}, which requires more natural assumptions on $\nu$ but holds under $L^\infty$ bounds: its proof will be presented in Section \ref{sec: main theorem}. Finally, in the last Section we will prove Corollary \ref{uniqueness corollary} and, more generally, discuss the uniqueness for the Sliced Wasserstein Flow.
\section{Preliminaries}
\label{preliminaries}
\subsection{Notation}
For a function $u : \mathbb{R}^d \mapsto \mathbb{R}$ smooth enough, we denote by $\nabla u$ its
gradient and $D^2 u$ its Hessian matrix. Moreover, for a map $T : \mathbb{R}^d \mapsto \mathbb{R}^d$, its Jacobian matrix is denoted by $DT$. We denote by \say{$\mathrm{id}$} the identity map. When referring to the target measure $\nu$, we will denote by $N_\vart (r)=\nu^\vart((-\infty,r])$, where $\nu^\vart=\pi^\vart_{\#}\nu$. Similarly, for a generic distribution $\rho$, we use the notation $R_\vart (r)=\rho^\vart((-\infty,r])$. We will often make an abuse of notation using $\rho,\nu$ as a symbol both for the probability measures or their densities, when no confusion can arise. We will denote by $B^{(d)}_r(x)$ the $d$-dimensional Euclidean ball centered at $x$ with radius $r$. When there is no ambiguity, we will write $B_r(x)$, and possibly just $B_r$ when $x=0$. Throughout the paper \say{$\text{spt }$} stands for the support of a measure, i.e., the smallest closed set on which it is concentrated. Finally, for any $p\geq 1$, we denote by $p'$ the H\"older conjugate exponent: $1/p+1/p'=1$.
We denote by $\omega_d=\mathcal L^d(B_1(0))$ the volume of the unit ball in $\R^d$; thus $\sigma(\mathbb S^{d-1})=d\omega_d$.
\subsection{Preliminary results in optimal transport}
Given any open subset $\Omega \subset \mathbb{R}^d$, we denote by $\mathscr{P}(\Omega)$ the set of probability measures with support contained in $\Omega$. For $p\geq 1$, we denote by $\mathscr{P}_p(\Omega)$ the set of probability measures on $\Omega$ with finite moment of order $p$ (in case $\Omega$ is unbounded), so that we have $\mathscr{P}_p(\Omega)=\{\mu\in \mathscr{P}(\Omega)\,:\, \int|x|^pd\mu<+\infty\}$.

Given $\mu, \nu \in \mathscr{P}(\Omega)$, we say that $T$ is a transport map from $\mu$ to $\nu$, and that $\nu$ is the push-forward of $\mu$ via $T$ (and we write $\nu=T_{\#}\mu$), if for every Borel set $A \subset \Omega$ we have
\[\nu(A)=\mu(T^{-1}(A))\]
or, equivalently, if for any Borel map $\phi \colon \Omega \to \mathbb{R}$ it holds
\[\int_{\Omega} \phi(x) d\nu(x) =\int_{\Omega}\phi(T(x))d\mu(x).\]
\begin{definition}
    \label{costs}
    Let $\mu, \nu \in \mathscr{P}\left(\Omega\right)$ and let $c:\Omega\times\Omega\to\mathbb R$ be a continuous cost function . 
    The map $T : \mathbb{R}^d \to \mathbb{R}^d$ is said to be an optimal transport map for the cost $c$ if it solves the problem
    \begin{equation}
        \label{Monge_pb}
        \min_{S} \left\{\int_{\Omega}c(x,S(x))d\mu(x), \hspace{2mm} S \hspace{2mm} \text{Borel map such that} \hspace{2mm} S_{\#}\mu=\nu \right\}.
    \end{equation}
    The map $\varphi: \mathbb{R}^d \to \mathbb{R}$ is said to be a Kantorovich potential if it solves the problem
    \begin{equation}
    \label{Potential_pb}
        \max_{\psi}\left\{\int_{\Omega}\psi(x)d\mu(x)+\int_{\Omega}\psi^c(x)d\nu(x)\right\},
    \end{equation}
    where $\psi^c(x) \coloneqq \inf_{y \in \mathbb{R}^d} \left\{c(x,y)-\psi(y)\right\}$.

Let $\Pi_\mu^\nu\coloneqq \{\gamma\ \in \mathcal{P}(\Omega\times\Omega)\colon p^1_{\#}\gamma=\mu, p^2_{\#}\gamma=\nu\}$, where $p^i (x_1,x_2)=x_i$. We define the $p$-Wasserstein distance from $\mu$ to $\nu$ to be
\begin{equation*}
    W_p(\mu,\nu)=\left(\int_{\Omega\times \Omega} |x_1-x_2|^{p}\,d\pi(x_1,x_2)\right)^{\frac{1}{p}}=\inf_{\gamma \in \Pi_\mu^\nu}\left(\int_{\Omega\times\Omega}|x_1-x_2|^p\,d\gamma(x_1,x_2)\right)^{\frac{1}{p}}.
\end{equation*}
     It is well known that $W_p$ defines a distance over $\mathscr{P}_p(\Omega)$ for any $p\geq 1$ (see, for example, \cite[Chapter 5]{santambrogio2015optimal}).
\end{definition}
\begin{remark} The problems \eqref{Monge_pb} and \eqref{Potential_pb} do not always admit a solution. However, when they exist, the two quantities defined by \eqref{Monge_pb} with \say{$\min$} replaced by \say{$\inf$}, and \eqref{Potential_pb} with \say{$\max$} replaced by \say{$\sup$} coincide. Moreover, when a solution $T$ to the Monge problem exists, then the joint density $\pi$ appearing in the definition of $W_p$ is the measure concentrated on $\text{gr}(T)\subset \Omega \times \Omega$, where $\text{gr}T$ denotes the graph of $T$, namely $\text{gr}T=\{(x,T(x))\in \Omega\times\Omega\colon x\in \Omega\}$. As a consequence, if $T$ is an optimal transport map and $\varphi$ is the Kantorovich potential for the associated problem, it holds
\begin{equation*}
\label{Wassersein distance, optimal transport, kantorovich potential}
    W_p^p(\mu,\nu)=\int_{\Omega} |T(x)-x|^p\,d\mu=\int_{\Omega}\varphi(x)\,d\mu(x)+\int_{\Omega}\varphi^c(x)\,d\nu(x).
\end{equation*}
\end{remark}
We are interested in the case $\Omega=\mathbb R^d$ and $c(x,y)=\frac 12|x-y|^2$. In this setting we have a relation between the optimal $T$ and the optimal $\varphi$ (which is differentiable almost everywhere
with respect to the source measure, when it is absolutely continuous) which is
\begin{equation}
\label{KantVsTransp}
T(x)=x-\nabla \varphi(x).
\end{equation}
In particular, we have $T=\nabla u$ where $u(x)=\frac12|x|^2-\varphi(x)$, and one can check that $u$ is convex. This is summarized in the following celebrated theorem.

\begin{theorem}[Brenier's theorem, see \cite{brenier1987decomposition}]
\label{Brenier's_thm}
    Let $\mu$, $\nu$ be two absolutely continuous probability measures in $\mathscr{P}_2(\mathbb R^d)$. A map $T \colon \mathbb{R}^d \to \mathbb{R}^d$ such that $T_{\#}\mu=\nu$ is optimal for the optimal transport problem \eqref{Monge_pb} with the cost $c(x,y)=\frac 12|x-y|^2$ if and only if there exists a convex function $u \colon \mathbb{R}^d \to \mathbb{R}$ such that $T=\nabla u$ a.e. 
    Moreover if this is the case, then $u$ must satisfy the Monge-Amp\`ere equation
    \begin{equation*}
    \nu(\nabla u(x))    \det D^2 u(x)=\mu(x)
    \end{equation*}
    a.e. on $\mathbb{R}^d$.
\end{theorem}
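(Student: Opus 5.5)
The statement just given is Brenier's theorem, a classical result quoted from \cite{brenier1987decomposition}. I would prove it through Kantorovich duality and the structure of $c$-concave functions for the cost $c(x,y)=\frac12|x-y|^2$.

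\emph{Sufficiency.} Suppose $T=\nabla u$ with $u$ convex and $T_\#\mu=\nu$. Expanding the square reduces minimizing $\int \frac12|x-S(x)|^2\,d\mu$ to maximizing $\int x\cdot S(x)\,d\mu$, because the second moments of $\mu$ and $\nu$ are fixed. Young's inequality gives $x\cdot y\le u(x)+u^*(y)$, with equality exactly when $y\in\partial u(x)$. Hence for any admissible plan $\gamma\in\Pi_\mu^\nu$,
\[\int x\cdot y\,d\gamma\le\int u\,d\mu+\int u^*\,d\nu=\int x\cdot\nabla u(x)\,d\mu.\]
This shows optimality, first among plans and therefore among maps. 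A small point to handle: $u$ and $u^*$ need not be integrable a priori. I would control them by truncation, or by using $u(x)\ge u(0)+p\cdot x$ together with the finite second moments.

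\emph{Necessity and existence.} First I would establish existence of an optimal plan $\gamma$ by tightness and lower semicontinuity. Next I would show that $\mathrm{spt}\,\gamma$ is cyclically monotone. Rockafellar's theorem then gives a convex l.s.c. $u$ with $\mathrm{spt}\,\gamma\subset\partial u$; equivalently, this $u$ comes from a Kantorovich potential through $u=\frac12|x|^2-\varphi$. Since $\mu$ is absolutely continuous and convex functions are differentiable Lebesgue-a.e. (Rademacher/Alexandrov), $\partial u(x)=\{\nabla u(x)\}$ for $\mu$-a.e. $x$. So $\gamma=(\mathrm{id},\nabla u)_\#\mu$.

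If $T$ is any optimal map, $(\mathrm{id},T)_\#\mu$ is an optimal plan, and the argument above gives $T\in\partial u$ a.e., i.e. $T=\nabla u$. One subtlety is that the potential may a priori depend on the plan. This is resolved by taking a single dual maximizer $\varphi$: every optimal plan is concentrated on the equality set $\{\varphi(x)+\varphi^c(y)=c(x,y)\}$.

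\emph{Monge--Amp\`ere equation.} This is the main obstacle, since $u$ is only convex, not $C^2$. I would use Alexandrov's theorem: $u$ is twice differentiable a.e. with Hessian $D^2u$. McCann's argument then applies. Because $\nu$ is absolutely continuous, $\nabla u^*$ pushes $\nu$ forward to $\mu$ and inverts $\nabla u$ a.e., so $\nabla u$ is injective off a $\mu$-null set. An area formula for monotone maps, restricted to the Alexandrov points where $\nabla u$ is approximately differentiable, gives
\[\int\phi(\nabla u)\,\nu(\nabla u)\det D^2u\,dx=\int\phi\,d\nu\]
for all test functions $\phi$. Comparing this with $\int\phi(\nabla u)\,d\mu$ yields $\nu(\nabla u(x))\det D^2u(x)=\mu(x)$ a.e. on $\mathrm{spt}\,\mu$. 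Off the support both sides vanish in the appropriate a.e. sense.

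The delicate step is justifying the change of variables. It needs a Lusin-type approximation of $\nabla u$ by Lipschitz maps on sets of large measure. It also needs the injectivity, which is where absolute continuity of $\nu$ enters.
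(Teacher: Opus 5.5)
The paper gives no proof of this statement: it is quoted from \cite{brenier1987decomposition} and used as a black box, so there is no argument of the paper to compare yours with. Your outline is sound. It follows the Knott--Smith/McCann route (optimal plan, cyclical monotonicity of its support, Rockafellar's theorem, a.e.\ differentiability, then McCann's Alexandrov-Hessian argument) rather than Brenier's original argument via the dual problem.

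A few steps are compressed and worth making explicit.
\begin{itemize}
\item The claim that $(\mathrm{id},T)_{\#}\mu$ is an optimal plan for every optimal map $T$ uses that the Monge and Kantorovich values coincide. You do get this from your existence step, since the optimal plan is induced by $\nabla u$.
\item The single-dual-maximizer discussion only matters for uniqueness. Applying Rockafellar directly to $\mathrm{spt}\,(\mathrm{id},T)_{\#}\mu$ already gives a convex $u$ with $T=\nabla u$ $\mu$-a.e.
\item In the Monge--Amp\`ere step, testing against $\phi\circ\nabla u$ alone does not identify densities. The precise fact is that $\nabla u(x)=\nabla u(x')$ with $x\neq x'$ forces $\nabla u(x)\in Z$, the Lebesgue-null set where $u^*$ is not differentiable. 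Off $(\nabla u)^{-1}(Z)$, $\nabla u$ is injective with inverse $\nabla u^*$, and the area formula gives $\int\psi\,\nu(\nabla u)\det D^2u\,dx=\int\psi\,d\mu$ for every Borel $\psi\ge 0$. On $(\nabla u)^{-1}(Z)$, both $\det D^2u$ and $\mu$ vanish a.e.: the former by the area formula since $|Z|=0$, the latter since $\mu((\nabla u)^{-1}(Z))=\nu(Z)=0$.
\item That last argument is what justifies the equation Lebesgue-a.e., including off $\mathrm{spt}\,\mu$. Injectivity off a $\mu$-null set alone would not give it.
\end{itemize}

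One caveat concerns the statement rather than your argument. The $u$ produced by Rockafellar is proper, l.s.c.\ and convex with values in $\mathbb{R}\cup\{+\infty\}$. In general it cannot be taken real-valued on all of $\mathbb{R}^d$, as the statement asserts.

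For example, take $d=1$, $\mu$ uniform on $[0,1]$, and $\nu$ with density $e^{-y}$ on $(0,+\infty)$. The optimal map $T(x)=-\log(1-x)$ is unbounded near $x=1$. A finite convex function on $\mathbb{R}$, however, has derivative bounded on $[0,1]$.

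So what you prove, and what is true, is the version with extended-valued $u$, finite and differentiable $\mu$-a.e. Note that $\mu$ only charges the interior of $\mathrm{dom}\,u$, because the boundary of a convex set is Lebesgue-null. The Monge--Amp\`ere equation then holds a.e.\ on the interior of $\mathrm{dom}\,u$. In your sufficiency argument, take the affine minorant of $u$ at a point of $\mathrm{dom}\,\partial u$ rather than at $0$.
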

For the one-dimensional problem, we are able to characterize optimal transport maps. A detailed analysis of the one-dimensional case is available in \cite{santambrogio2015optimal}, Chapter 2.
\begin{theorem}[Brenier's theorem in $d=1$]
\label{brenier 1d}
    Let $\mu$, $\nu$ be two probability densities with finite second moments. Then there exists a unique non-decreasing transport map from $\mu$ to $\nu$, which is optimal and given by
    \[T(x)=F_\nu^{[-1]}\left(F_{\mu}(x)\right),\]
    where $F_{\mu}(x)=\mu((-\infty,x])$ and $F^{[-1]}(y)=\inf\{t\in \mathbb{R} \colon F(t)\geq y\}$.
\end{theorem}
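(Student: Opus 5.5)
The statement is the one-dimensional Brenier theorem: for probability densities $\mu,\nu$ on $\R$ with finite second moments, $T=F_\nu^{[-1]}\circ F_\mu$ is the unique non-decreasing map pushing $\mu$ to $\nu$, and it is optimal. I would prove three things in order: that $T$ is a transport map, that it is the only monotone one, and that it is optimal.

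\emph{Step 1: $T_\#\mu=\nu$.} Since $\mu$ has no atoms, $F_\mu$ is continuous. Hence $(F_\mu)_\#\mu$ is the uniform measure on $[0,1]$, because $\mu(\{F_\mu\le s\})=s$ for every $s\in[0,1]$. Next I use the elementary equivalence $F_\nu^{[-1]}(y)\le r \iff y\le F_\nu(r)$, which follows from right-continuity of $F_\nu$. It gives $|\{y\in(0,1): F_\nu^{[-1]}(y)\le r\}|=F_\nu(r)$, so $(F_\nu^{[-1]})_\#\mathcal L^1|_{[0,1]}=\nu$. Composing the two push-forwards yields $T_\#\mu=\nu$. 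Monotonicity of $T$ is clear, since both $F_\mu$ and $F_\nu^{[-1]}$ are non-decreasing.

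\emph{Step 2: uniqueness.} Let $S$ be non-decreasing with $S_\#\mu=\nu$. For $\mu$-a.e.\ $x$, monotonicity gives
\[
\{S\le S(x)\}\supseteq(-\infty,x] \quad\text{and}\quad \{S<S(x)\}\subseteq(-\infty,x).
\]
Therefore $F_\nu(S(x)^-)\le F_\mu(x)\le F_\nu(S(x))$. Comparing with the definition of $F_\nu^{[-1]}$, this forces $S(x)=F_\nu^{[-1]}(F_\mu(x))$, except possibly when $F_\mu(x)$ lies on a flat piece of $F_\nu$. Such values of $F_\mu(x)$ form a countable set, and since $(F_\mu)_\#\mu$ is uniform, they are attained only on a $\mu$-null set. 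So $S=T$ $\mu$-a.e.

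\emph{Step 3: optimality.} I would use the Kantorovich formulation. The key claim is that the plan $\gamma_T=(\mathrm{id},T)_\#\mu$ is supported on a monotone set $\Gamma$, meaning $(x_1-x_2)(y_1-y_2)\ge 0$ for all $(x_1,y_1),(x_2,y_2)\in\Gamma$. This follows directly from $T$ being non-decreasing, after taking the closure of the graph. For the quadratic cost, a monotone set is $c$-cyclically monotone. Indeed, any $c$-cyclical monotonicity inequality reduces to $\sum_i x_iy_i\ge\sum_i x_iy_{\sigma(i)}$ for all permutations $\sigma$, which is the rearrangement inequality for similarly ordered sequences. Finally, I invoke the standard fact that a plan with $c$-cyclically monotone support is optimal when the cost is continuous and the second moments are finite (see \cite{santambrogio2015optimal}, Chapter 1). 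Since $\gamma_T$ is induced by the map $T$, the infimum in \eqref{Monge_pb} equals the Kantorovich one, so $T$ is optimal.

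The main obstacle is Step 3, specifically justifying that cyclical monotonicity implies optimality. This requires building a Kantorovich potential via the Rockafellar construction, which is standard but not proved in the paper, so I would cite it. If a self-contained argument is preferred, there is an alternative. For any coupling $\gamma\in\Pi_\mu^\nu$,
\[
\int|x-y|^2\,d\gamma=\int x^2\,d\mu+\int y^2\,d\nu-2\int xy\,d\gamma .
\]
It then suffices to show that $\int xy\,d\gamma$ is maximized by the monotone coupling. This can be checked first on discrete measures via the rearrangement inequality and then passed to the general case by approximation, using finite second moments for tightness and the convergence of the cross term.
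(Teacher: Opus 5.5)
The paper gives no proof of this statement; it quotes it from Chapter 2 of \cite{santambrogio2015optimal}. Your argument is correct. Steps 1 and 2 are the standard quantile-function arguments found there: $(F_\mu)_\#\mu=\mathcal L^1|_{[0,1]}$ for atomless $\mu$, $(F_\nu^{[-1]})_\#\mathcal L^1|_{[0,1]}=\nu$, and countability of the flat levels of $F_\nu$ to get uniqueness $\mu$-a.e.

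The real difference is in Step 3, where the reference argues in the opposite direction:
\begin{itemize}
\item an optimal plan exists by compactness;
\item its support is $c$-cyclically monotone, which is the easy, necessary direction;
\item for a strictly convex cost $h(y-x)$ such a support is a monotone set;
\item there is exactly one plan with monotone support and marginals $\mu,\nu$, namely $\gamma_T$.
\end{itemize}
That route never needs ``cyclically monotone $\Rightarrow$ optimal'', and it also gives uniqueness of the optimal plan. To feed your Step 2 into it, you would only need to observe that, since $\mu$ is atomless, a plan with monotone support is induced by a non-decreasing map.

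Your route uses the harder sufficiency direction. Citing it is fine here, since $|x-y|^2\le 2|x|^2+2|y|^2$ has an integrable right-hand side, and it certifies optimality of $T$ directly. In one dimension you can make it self-contained without Rockafellar's construction. The function $u(x)=\int_{x_0}^x T(s)\,ds$ is convex with $T(x)\in\partial u(x)$, so Fenchel--Young gives $xy\le u(x)+u^*(y)$, with equality at $y=T(x)$. The finite second moments give $u\in L^1(\mu)$ and $u^*\in L^1(\nu)$: use the affine lower bounds together with the equality on the graph. Integrating against any $\gamma\in\Pi_\mu^\nu$ then yields $\int xy\,d\gamma\le\int xT(x)\,d\mu$. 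This is cleaner than your discrete-approximation alternative, which as sketched still leaves unjustified the approximation of an arbitrary coupling and the stability of the monotone one.
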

The metric space $(\mathscr{P}_2(\R^d),W_2)$ is a geodesic space, namely: given any two elements $\rho_0,\rho_1 \in \mathscr{P}_2(\R^d)$, there exists a curve $\rho_t\colon[0,1]\to \mathscr{P}_2(\R^d)$ connecting them with the property that 
\begin{equation}
\label{constant speed geo}
    W_2(\rho_s,\rho_t)=|t-s|W_2(\rho_0,\rho_1) \hspace{3mm}\text{ for any } 0<s<t<1.
\end{equation}
\begin{definition}
    A curve satisfying \eqref{constant speed geo} is called \textit{constant speed geodesic} from $\rho_0$ to $\rho_1$.
    
    A functional $\mathscr{F}\colon \mathscr{P}_2(\R^d)\to \R$ is $\lambda$-geodesically convex if for every two points $\rho_0, \rho_1 \in \mathscr{P}_2(\R^d)$ there exists a constant speed geodesic $\rho(t)$ connecting $\rho(0)=\rho_0$ and $\rho(1)=\rho_1$ such that
\begin{equation}
\label{lambda convexity def}
    \mathscr{F}(\rho_t)\leq (1-t)\mathscr{F}(\rho_0)+t\mathscr{F}(\rho_1)-\frac{\lambda}{2}t(1-t)W_2^2(\rho_0,\rho_1).
\end{equation}
\end{definition}
The case in which $\lambda>0$ is generally referred to as \textit{strong convexity}, whereas for $\lambda=0$ we recover the usual definition of convexity. If $t\mapsto \mathscr{F}(\rho_t)$ is twice differentiable, \eqref{lambda convexity def} is equivalent to
\begin{equation*}
    \frac{d^2}{dt^2}\mathscr{F}(\rho_t)\geq \lambda W_2^2(\rho_0,\rho_1).
\end{equation*}
The convexity of a functional over the space of probability measures, though not being essential for proving the well posedness of the associated gradient flow in the Wasserstein space, is the typical assumption under which uniqueness and contractivity results may be obtained (cf. \cite{ambrosio2005gradient, SantSurvey}).

Throughout this paper, we will make use several times of the following two results.
\begin{theorem}{\cite[Theorem 5.27]{santambrogio2015optimal}}
\label{geodetiche}
Let $\rho_0,\rho_1 \in \mathscr{P}_2(\R^d)$ with $\rho_0$ absolutely continuous
with respect to the Lebesgue measure, and suppose there exists an optimal transport map $T$ for the quadratic cost $c(x,y)=|x-y|^2$, i.e., a solution of \eqref{Monge_pb}. Then, the unique geodesic $\rho_t \colon [0,1]\to \mathscr{P}_2(\R^d)$ from $\rho(0)=\rho_0$ to $\rho(1)=\rho_1$ is given by
\begin{equation*}
    \rho_t=(T_t)_{\#}\rho_0,
\end{equation*}
where
\begin{equation*}
    T_t(x)=(1-t)x+tT(x)
\end{equation*}
for any $x \in \text{spt }\,\rho_0$.
Then $T_t$ is injective for $t \in [0,1)$ and the curve $(\rho_t)_{t\geq 0}$ solves
\begin{equation*}
    \partial_t \rho_t +\nabla \cdot (\rho w )=0,
\end{equation*}
where $w_t(x)=(T-\mathrm{id})\circ T_t^{-1}(x)$.
\end{theorem}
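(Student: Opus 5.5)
Since $\rho_0$ is absolutely continuous and $T$ is an optimal map, I will apply Brenier's theorem (Theorem \ref{Brenier's_thm}) to write $T=\nabla u$ with $u$ convex. The whole argument rests on three facts: $t\mapsto T_t$ is a curve of monotone maps, $\rho_t=(T_t)_\#\rho_0$ is a constant-speed geodesic, and the time derivative of this curve is given by the continuity equation.

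\emph{The curve is a geodesic.} Set $u_t(x)=(1-t)\frac{|x|^2}{2}+t\,u(x)$. This function is convex, and it is strictly convex for $t<1$, with $\nabla u_t=T_t$. Hence $T_t$ is the Brenier map from $\rho_0$ to $\rho_t$, so it is optimal. This gives
\begin{equation*}
W_2(\rho_0,\rho_t)=\|T_t-\mathrm{id}\|_{L^2(\rho_0)}=t\,\|T-\mathrm{id}\|_{L^2(\rho_0)}=t\,W_2(\rho_0,\rho_1).
\end{equation*}
For general $s<t$, the coupling $(T_s,T_t)_\#\rho_0$ gives $W_2(\rho_s,\rho_t)\le |t-s|\,W_2(\rho_0,\rho_1)$. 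Combining this with the triangle inequality along $0,s,t,1$ forces equality, which proves \eqref{constant speed geo}.

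\emph{Uniqueness.} Take any geodesic $\mu_t$ and a point $t\in(0,1)$. Glue optimal plans from $\rho_0$ to $\mu_t$ and from $\mu_t$ to $\rho_1$. Equality in the triangle inequality forces the glued plan to be optimal from $\rho_0$ to $\rho_1$ and to be concentrated on segments. Since $\rho_0$ is absolutely continuous, the optimal plan is unique and induced by $T$. It follows that $\mu_t=(T_t)_\#\rho_0$.

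\emph{Injectivity and the continuity equation.} For $t<1$ and $x\neq y$, monotonicity of $\nabla u$ gives
\begin{equation*}
\langle T_t(x)-T_t(y),x-y\rangle\ge(1-t)|x-y|^2>0,
\end{equation*}
so $T_t$ is injective on $\operatorname{spt}\rho_0$, in the a.e.\ sense where $\nabla u$ is defined. Now test against $\phi\in C_c^\infty$:
\begin{equation*}
\frac{d}{dt}\int\phi\,d\rho_t=\int\nabla\phi(T_t(x))\cdot\bigl(T(x)-x\bigr)\,d\rho_0=\int\nabla\phi\cdot w_t\,d\rho_t,
\end{equation*}
with $w_t=(T-\mathrm{id})\circ T_t^{-1}$. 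Differentiation under the integral is justified by dominated convergence, since $|T-\mathrm{id}|\in L^2(\rho_0)\subset L^1(\rho_0)$. This is exactly the weak form of the stated continuity equation.

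The main obstacle is the uniqueness step. There one must check carefully that the gluing argument produces an optimal plan supported on the graph of $T$. One also has to handle the measurability of $T_t^{-1}$, which is only defined on a full-measure set.
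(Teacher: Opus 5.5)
Your proposal is correct and is essentially the standard proof of this result; the paper gives no proof of its own and only cites \cite[Theorem 5.27]{santambrogio2015optimal}. The steps match that standard argument: the coupling $(T_s,T_t)_\#\rho_0$ with the triangle inequality for the geodesic property, gluing plus equality in Minkowski's inequality plus uniqueness of the optimal plan for uniqueness, monotonicity of $\nabla u$ (i.e.\ cyclical monotonicity of the optimal plan) for injectivity, and differentiation of $\int\phi\circ T_t\,d\rho_0$ for the continuity equation.

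Two small remarks. First, you need Brenier's theorem in the form requiring only $\rho_0$ to be absolutely continuous, since the version stated in the paper assumes both measures are and $\rho_1$ need not be. Second, your measurability worry disappears: your monotonicity inequality already gives $|T_t(x)-T_t(y)|\ge(1-t)|x-y|$, so $T_t^{-1}$ is $(1-t)^{-1}$-Lipschitz on its image.
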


\subsection{Sliced-Wasserstein distance and sliced-Wasserstein flow}
\label{S: Sliced wasserstein distance and sliced wasserstein flow}
Given two probability measures $\mu,\nu \in \mathscr{P}_2(\mathbb{R}^d)$, we define their sliced Wasserstein distance as in \eqref{SW distance}. As said in the introduction, the sliced Wasserstein distance does not coincide with the classical $W_2$. However, the two distances generate the same topology (see \cite{bayraktar2021strong}). Moreover, whenever $\text{spt }\mu,\text{spt }\nu\subset B(0,R)$, there exist an exponent $\alpha>0$ and constant $C(R)>0$ such that it holds $C(R)W_2(\mu,\nu)\leq SW_2^\alpha(\mu,\nu)\leq W^\alpha_2(\mu,\nu)$  (see \cite[Chapter 5]{bonnotte2013unidimensional} and \cite{CarFigMerWan}). Many properties, which highlight surprising differences in the behavior of $SW_2$ compared to $W_2$, have been studied in  \cite{ParSle}. We also mention the possibility of defining a $p$-sliced distance $SW_p$, via \begin{equation*}
SW_p \coloneqq\left(\fint_{\mathbb{S}^{d-1}}W_p^p((\pi^{\vartheta})_{\#}\mu,(\pi^{\vartheta})_{\#}\nu)d\sigma(\vartheta)\right)^{1/p},\end{equation*} which is sometimes more convenient for sharp comparisons with the standard Wasserstein distances (see \cite{CarFigMerWan}). Also, a distance of the form
\begin{equation*}
    SW_{p,q}(\mu,\nu)\coloneqq \left(\fint_{\mathbb{S}^{d-1}}W^{q}_p(\mu^\vart,\nu^\vart)\right)^{\frac{1}{q}}
\end{equation*}
can be introduced, with a particular interest when $q=+\infty$, in which case one wants to maximal Wasserstein distance among projections.

In this Section we want also to show that the velocity field associated with the SWF corresponds to the velocity field of the gradient descent for the functional $\F_\nu$.
Namely, we claim that $v[\rho_t](x)$ as defined in \eqref{velocity field} coincides with the negative gradient of the first variation of $\F_\nu$:
\begin{equation*}
    -\nabla \frac{\delta \F_\nu}{\delta \rho}(\rho)=\fint_{\mathbb{S}^{d-1}}(T_\vart(x\cdot \vart)-x\cdot \vart)\vart \,d\sigma(\vart)\, \text{ for any }x\in \text{spt}\rho.
\end{equation*}
Indeed, since the first variation of the functional $\rho \mapsto \frac12 W_2^2(\rho,\nu)$ is the Kantorovich potential for the transport problem from $\mu$ to $\nu$ (see \cite{santambrogio2015optimal}, proposition 7.17), we get
\begin{equation*}
    \frac{\delta \mathscr{F}_\nu}{\delta \rho}(\rho)(x)=\fint_{\mathbb{S}^{d-1}} \frac{1}{2} \frac{\delta W_2^2(\rho^\vartheta,\nu^\vartheta)}{\delta \rho}(x) d\sigma(\vartheta)=\fint_{\mathbb{S}^{d-1}} \varphi^{\vartheta}(x\cdot \vartheta)d\sigma(\vartheta),
\end{equation*}
where we denoted by $\varphi^{\vartheta}$ the Kantorovich 
potential between $\rho^\vartheta \coloneqq (\pi^\vartheta)_{\#}\rho$ and
$\nu^\vartheta\coloneqq (\pi^\vartheta)_{\#}\nu$.  
Using also \eqref{KantVsTransp}, this in turn leads to
\begin{equation*}
    -\nabla \frac{\delta \mathscr{F}_\nu}{\delta \rho}(\rho)=-\fint_{\mathbb{S}^{d-1}} (\varphi^\vartheta)'(x\cdot \vartheta)\vartheta d\sigma(\vartheta)=\fint_{\mathbb{S}^{d-1}} \left(T^\vartheta(x\cdot \vartheta)-x\cdot\vartheta\right)\vartheta d\sigma(\vartheta),
\end{equation*}
which is precisely the velocity field defined in \eqref{velocity field}. Therefore the SWF with target $\nu$ and source $\rho_0$ corresponds to a solution of \eqref{SWF eq rho_0}

We conclude this section recalling the main result on the Sliced Wasserstein Flow, obtained by viewing it as the gradient flow of $SW_2^2$, which provides existence together with $L^p$ estimates for the solutions:
\begin{theorem}{\cite[Theorem 5.5.3]{bonnotte2013unidimensional}}
    Let $\nu\in \mathscr{P}_2(\R^d)$, $p \in [1,+\infty]$. Given an absolutely continuous probability measure $\rho_0\in \mathscr{P}_2(\R^d)$, there exists a continuous curve $(\rho_t)_{t\geq 0}$ solution of \eqref{SWF eq rho_0}. Moreover, for each time $t\geq 0$, the measure $\rho_t$ has a density in $L^p$ and
    \begin{equation}
    \label{stime Bonnotte Lp}
    \|\rho_t\|_{L^p}\leq e^{t\sqrt{d}/p'}\|\rho_0\|_{L^p}.
    \end{equation}
\end{theorem}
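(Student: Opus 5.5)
The plan is to build the solution with the JKO (minimizing movement) scheme for $\F_\nu$ and to read the $L^p$ bound off a discrete flow-interchange inequality. Fix $\tau>0$ and define recursively
\[
\rho^\tau_{k+1}\in\operatorname{argmin}_{\rho\in\mathscr P_2(\R^d)}\ \F_\nu(\rho)+\frac{1}{2\tau}W_2^2(\rho,\rho^\tau_k).
\]

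\emph{Existence of minimizers.} The penalization bounds second moments along a minimizing sequence, which gives tightness. $\F_\nu$ is lower semicontinuous (indeed continuous) for narrow convergence with bounded second moments, since each $W_2^2(\rho^\vart,\nu^\vart)$ is, and Fatou applies in $\vart$. The term $W_2^2(\cdot,\rho^\tau_k)$ is lower semicontinuous, so a minimizer exists. We will also check that $\rho^\tau_{k+1}$ is absolutely continuous when $\rho^\tau_k$ is; this comes out of the $L^p$ estimate below, applied first with $p>1$.

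\emph{Optimality condition.} Perturbing the minimizer gives the first-order condition. Using the first variation computed in Section \ref{S: Sliced wasserstein distance and sliced wasserstein flow}, it reads
\[
\frac{\mathrm{id}-T_k}{\tau}=-v[\rho^\tau_{k+1}]\quad \rho^\tau_{k+1}\text{-a.e.},
\]
where $T_k$ is the optimal map from $\rho^\tau_{k+1}$ to $\rho^\tau_k$. Equivalently,
\[
\rho^\tau_k=(\mathrm{id}+\tau v[\rho^\tau_{k+1}])_\#\rho^\tau_{k+1}.
\]

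\emph{Discrete $L^p$ estimate.} Let $G(\rho)=\frac1{p-1}\int\rho^p$. By McCann's theorem $G$ is displacement convex. Writing its above-tangent inequality along the geodesic from $\rho^\tau_{k+1}$ to $\rho^\tau_k$ gives
\[
G(\rho^\tau_k)\ \ge\ G(\rho^\tau_{k+1})+\int \nabla\Big(\tfrac{p}{p-1}\rho_{k+1}^{p-1}\Big)\cdot(T_k-\mathrm{id})\,\rho_{k+1}
= G(\rho^\tau_{k+1})+\tau\int \nabla(\rho_{k+1}^p)\cdot v[\rho_{k+1}] .
\]
Integrating by parts turns the last term into $-\tau\int\rho_{k+1}^p\,\nabla\!\cdot v[\rho_{k+1}]$. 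Formally,
\[
\nabla\!\cdot v[\rho](x)=\fint\big((T^\vart)'(x\cdot\vart)-1\big)\,d\sigma(\vart).
\]
Each $T^\vart$ is monotone by Theorem \ref{brenier 1d}, so its distributional derivative is a nonnegative measure and $\nabla\cdot v\ge -C_d$ in the sense of distributions. This yields $(1-C\tau(p-1))\|\rho^\tau_{k+1}\|_p^p\le\|\rho^\tau_k\|_p^p$. Iterating gives $\|\rho^\tau_k\|_{L^p}\le e^{C k\tau/p'}\|\rho_0\|_{L^p}$ up to $o(1)$ as $\tau\to0$. A more careful bound on the angular average should give the stated constant $\sqrt d$. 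The case $p=\infty$ then follows by letting $p\to\infty$; for $p=1$ there is nothing to prove.

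\emph{Passage to the limit.} The standard estimate $\sum_k W_2^2(\rho^\tau_k,\rho^\tau_{k+1})\le 2\tau\F_\nu(\rho_0)$ gives $\tfrac12$-Hölder equicontinuity in $W_2$ of the piecewise-constant interpolations, together with moment bounds. Arzelà–Ascoli then yields a limit curve $\rho_t$, and the $L^p$ bounds pass to it by lower semicontinuity. The discrete weak formulation, obtained from the optimality condition by a Taylor expansion of test functions, converges to the weak form of \eqref{SWF eq rho_0}, provided $\rho^\tau v[\rho^\tau]\to\rho\, v[\rho]$ in distributions.

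\emph{Main obstacles.} I expect the hardest step to be this identification of the nonlinear term. I would first prove it for a.e.\ $\vart$ using stability of one-dimensional optimal maps: $\rho^\vart_\tau\to\rho^\vart$ narrowly, and the limits are absolutely continuous thanks to the uniform $L^p$ bounds. I would then conclude with dominated convergence in $\vart$, using uniform bounds on $\int|T^\vart-\mathrm{id}|^2\,d\rho^\vart$. A secondary technical point is making the integration by parts against the measure-valued derivative $(T^\vart)'$ rigorous. For this I would regularize: either mollify $\rho_{k+1}$, or replace $G$ by a smooth convex approximation and pass to the limit.
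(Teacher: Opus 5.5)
The paper does not prove this statement. It quotes Bonnotte's JKO-based theorem, which is proved for $\nu$ with strictly positive density on $\overline{B}(0,1)$, and extends it to general $\nu$ by approximation, since the $L^p$ bound does not depend on $\nu$. You use the same JKO strategy directly for general $\nu$. That avoids the extra limit in $\nu$, at the cost of handling non-compact supports and possibly discontinuous $N_\vart^{[-1]}$ in the identification step. As written, however, your central estimate has a sign error.

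\emph{Sign of the Euler--Lagrange equation.} We have $v[\rho]=-\nabla\frac{\delta\F_\nu}{\delta\rho}(\rho)$. The Kantorovich potential $\psi$ from $\rho^\tau_{k+1}$ to $\rho^\tau_k$ satisfies $\psi/\tau+\frac{\delta\F_\nu}{\delta\rho}(\rho^\tau_{k+1})=\mathrm{const}$, with $T_k=\mathrm{id}-\nabla\psi$. Hence the correct condition is $\frac{\mathrm{id}-T_k}{\tau}=+v[\rho^\tau_{k+1}]$, i.e.\ $\rho^\tau_k=(\mathrm{id}-\tau v[\rho^\tau_{k+1}])_\#\rho^\tau_{k+1}$. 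You can check this on $\int\frac a2|x|^2\,d\rho$, where $v=-ax$ and $T_k=(1+a\tau)\mathrm{id}$.

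With your sign, the tangent inequality gives $G(\rho_{k+1})\le G(\rho_k)+\tau\int\rho_{k+1}^p\,\nabla\cdot v$. That requires an \emph{upper} bound on $\nabla\cdot v$, and none exists, because $(T^\vart)'=\rho^\vart/\nu^\vart(T^\vart)$ is unbounded in general. The inequality you state is what the corrected sign gives. The term becomes $-\tau\int\rho_{k+1}^p\,\nabla\cdot v$, and monotonicity yields $\nabla\cdot v=\fint(T^\vart)'(x\cdot\vart)\,d\sigma-1\ge-1$. So $C=1$ and you get $e^{t/p'}$, which already implies the stated bound; no finer angular estimate is needed.

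You can also avoid integrating by parts against the singular part of $(T^\vart)'$. Use the Monge--Amp\`ere relation $\rho_{k+1}=\rho_k\circ T_k\,\det D_AT_k$, where $D_A$ is the approximate differential, together with $0\le D_AT_k=I-\tau D_Av\le(1+\tau/d)I$. The upper bound holds because $D(v+\mathrm{id}/d)=\fint(T^\vart)'\,\vart\otimes\vart\,d\sigma\ge0$. This gives $\int\rho_{k+1}^p\le(1+\tau/d)^{d(p-1)}\int\rho_k^p$ for every $p$, including $p=\infty$.

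\emph{Absolute continuity.} Several ingredients of your estimate presuppose that $\rho^\tau_{k+1}$ is absolutely continuous, indeed in $L^p$:
the optimality condition, the map $T_k$, the field $v[\rho^\tau_{k+1}]$ (which needs atomless projections), and $\nabla(\rho_{k+1}^{p-1})$. So absolute continuity cannot come ``out of the $L^p$ estimate''; the argument is circular. $\F_\nu$ has no internal energy that would force it, so you must break the circle. For instance, minimize $\F_\nu+\eps G+\frac1{2\tau}W_2^2(\cdot,\rho^\tau_k)$, whose minimizers lie in $L^p$. The extra displacement-convex term only helps in the estimate, so the bound is uniform in $\eps$, and letting $\eps\to0$ gives a minimizer of the original step that satisfies it.

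For the same reason, ``for $p=1$ there is nothing to prove'' is wrong. The statement allows $\rho_0$ merely absolutely continuous, and then every bound with $p>1$ is vacuous. Yet your identification of $\rho^\tau v[\rho^\tau]$, and the meaning of $v[\rho_t]$ itself, require $\rho_t$ to be absolutely continuous. To cover this case, run the same estimate with a superlinear convex $U$ such that $\int U(\rho_0)<\infty$ (de la Vall\'ee-Poussin), chosen to satisfy McCann's condition and a doubling bound. This gives equi-integrability of $\rho^\tau_k$ on bounded time intervals.
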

\begin{remark} In \cite{bonnotte2013unidimensional}, the above result is proved under the assumption that $\nu$ is strictly positive and supported in $\overline{B}(0,1)$. However, since the estimate obtained does not depend on this condition, by approximation we can deduce the theorem in the form in which we presented it.
\end{remark}
\section{Lipschitz bounds on $v$ imply $\lambda$-geodesic convexity of $\mathscr{F}_\nu$}
\label{S: from Lip to Conv}
In this section, we show how to derive the geodesic convexity of $\mathscr{F}_\nu$ when the velocity field $v$ is Lipschitz continuous in space. Lipschitz bounds on $v$ will in turn be obtained in Section \ref{S: Lip bounds}.
\begin{lemma}
\label{Lemma 1}
    Let $v[\rho]$ be defined by \eqref{velocity field}. Assume that $K\subset \mathscr{P}_2(\R^d)$ is a geodesically convex set and that for any $\rho \in K$
    \begin{equation*}
        \text{Lip}\left(v[\rho]+\frac{\mathrm{id}}{d}\right)\leq L.
    \end{equation*}
    Then $\rho\mapsto \mathscr{F}_\nu(\rho)$ is $\lambda$-geodesically convex on $K$ for $\lambda=\frac{1}{d}-L$.
\end{lemma}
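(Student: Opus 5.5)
We aim to show that along the geodesic $\rho_t=(T_t)_\#\rho_0$ given by Theorem \ref{geodetiche}, the function $f(t)=\mathscr{F}_\nu(\rho_t)$ satisfies the "above-tangent" inequality
\[
f(1)\geq f(0)+f'(0^+)+\frac{\lambda}{2}W_2^2(\rho_0,\rho_1)
\]
together with its symmetric counterparts at every intermediate time. This is the standard way to obtain \eqref{lambda convexity def}: applying the inequality on the subgeodesics $[t,0]$ and $[t,1]$ and combining the two with weights $1-t$ and $t$ cancels the derivative terms. The inequality should come from the fact that $\mathscr{F}_\nu$ is an average of one-dimensional functionals. In $d=1$, $\frac12 W_2^2(\cdot,\nu^\vart)$ is convex along any interpolation $((1-s)\mathrm{id}+sS)_\#\mu$ for arbitrary (not necessarily monotone) maps $S$, with the quadratic defect computed exactly.

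\textbf{Step 1: a one-dimensional above-tangent inequality.} Fix $\vart$. Let $\varphi^\vart$ be the Kantorovich potential from $\rho_0^\vart$ to $\nu^\vart$, so that $T^\vart=\mathrm{id}-(\varphi^\vart)'$, and set $\psi^\vart(r)=\frac{r^2}{2}-\varphi^\vart(r)$, which is convex. By Kantorovich duality, the pair $(\varphi^\vart,(\varphi^\vart)^c)$ is admissible for the problem from $\rho_1^\vart$ to $\nu^\vart$. Hence
\[
\tfrac12W_2^2(\rho_1^\vart,\nu^\vart)-\tfrac12W_2^2(\rho_0^\vart,\nu^\vart)\geq \int\big(\varphi^\vart(T(x)\cdot\vart)-\varphi^\vart(x\cdot\vart)\big)\,d\rho_0(x).
\]
Writing $\varphi^\vart=\frac{r^2}{2}-\psi^\vart$ and using the convexity of $\psi^\vart$, with $a=x\cdot\vart$ and $b=T(x)\cdot\vart$, we get
\[
\varphi^\vart(b)-\varphi^\vart(a)\geq (\varphi^\vart)'(a)(b-a)+\tfrac12(b-a)^2-\big[\psi^\vart(b)-\psi^\vart(a)-(\psi^\vart)'(a)(b-a)\big].
\]
The naive estimate only yields a $\frac12(b-a)^2$ term minus a Bregman remainder of $\psi^\vart$. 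The remainder has the wrong sign, so this cannot be the right path; the Lipschitz hypothesis must be used instead.

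\textbf{Step 2: averaging and using the Lipschitz bound.} The cleaner route is to bound the derivative of $f$ directly. Recall that $-\nabla\frac{\delta\mathscr F_\nu}{\delta\rho}(\rho)=v[\rho]$. Set $G(\rho):=\frac{\delta \mathscr{F}_\nu}{\delta\rho}(\rho)$, so that $\nabla G=-v$. Along the geodesic,
\[
f'(t)=-\int v[\rho_t](T_t(x))\cdot(T(x)-x)\,d\rho_0(x).
\]
Monotonicity of $t\mapsto f'(t)$ up to $\lambda W_2^2$ is then what is needed. I would get it from the one-dimensional structure. Each $\vart$-slice functional $\frac12W_2^2(\cdot,\nu^\vart)$ is $1$-convex along one-dimensional geodesics, and the projections $\pi^\vart\circ T_t$ are affine interpolations. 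These interpolations are, however, not monotone in general, and the failure of monotonicity is precisely what is controlled by $\mathrm{Lip}(v+\mathrm{id}/d)$. Concretely, I would compare $f'(1)$ and $f'(0)$:
\[
f'(1)-f'(0)=\int\big(-v[\rho_1](T(x))+v[\rho_0](x)\big)\cdot(T(x)-x)\,d\rho_0.
\]
Writing $-v=\frac{\mathrm{id}}{d}-(v+\frac{\mathrm{id}}{d})$ splits this into two pieces. The first is the $\frac1d|T-x|^2$ term, since $\fint\vart\otimes\vart\,d\sigma=\frac{1}{d}I$; integrated, it gives $\frac1d W_2^2$. The second is a difference of $w:=v+\mathrm{id}/d$ evaluated at two different measures and two different points. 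Its same-measure part is bounded by $L\,W_2^2$ via $|w[\rho](T(x))-w[\rho](x)|\leq L|T(x)-x|$. The cross-measure part, $\int (w[\rho_0]-w[\rho_1])(T(x))\cdot(T(x)-x)\,d\rho_0$, I would handle by noting that it equals a difference of $(\varphi^\vart)'$ terms. By one-dimensional cyclical monotonicity, namely the monotonicity of $T^\vart_1\circ R_1$ against $T^\vart_0\circ R_0$ with quantile coupling, this term has a favorable sign. That identity is the expected main obstacle. It is where the one-dimensional "$W_2^2$ is $1$-convex" property enters, and it must be verified by writing $(\varphi^\vart_i)'(r)=r-N_\vart^{[-1]}(R_{i,\vart}(r))$ and using that $\rho_1^\vart=(\pi^\vart\circ T)_\#\rho_0$.

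\textbf{Step 3: conclusion.} Once $f'(t)-f'(s)\geq(\frac1d-L)(t-s)W_2^2(\rho_0,\rho_1)$ is established for all $s<t$ (by restricting to subgeodesics, since $K$ is geodesically convex), integration gives $f''\geq\lambda W_2^2$ in the distributional sense. This is equivalent to \eqref{lambda convexity def} with $\lambda=\frac1d-L$.
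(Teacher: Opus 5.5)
There is a genuine gap in Step 2: the claimed favorable sign of the cross-measure term is false. You need
\[
J:=\int\bigl(w[\rho_0]-w[\rho_1]\bigr)(T(x))\cdot\bigl(T(x)-x\bigr)\,d\rho_0(x)\;\ge\;0 .
\]
If the projected coupling $(x\cdot\vart,T(x)\cdot\vart)_\#\rho_0$ were monotone, then $R_{1,\vart}(T(x)\cdot\vart)=R_{0,\vart}(x\cdot\vart)$. The $\vart$-slice of $J$ would then be $\int\bigl(N_\vart^{-1}(R_{0,\vart}(T(x)\cdot\vart))-N_\vart^{-1}(R_{0,\vart}(x\cdot\vart))\bigr)(T(x)-x)\cdot\vart\,d\rho_0\ge 0$, which is presumably the argument you have in mind. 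For $d\ge2$, however, the projections of an optimal map are not monotone (particles cross in projection), and then there is no sign. Write $g^\vart_t:=-\partial_tR_{t,\vart}$ for the projected flux along the geodesic. Then
\[
J=\fint_{\mathbb S^{d-1}}\int_0^1\int_\R\frac{g^\vart_t(c)\,g^\vart_1(c)}{\nu^\vart(T_{t,\vart}(c))}\,dc\,dt\,d\sigma(\vart),
\]
which pairs the flux at time $t$ with the flux at time $1$. Here is a concrete counterexample within the lemma's hypotheses:
\begin{itemize}
\item Take $d=2$ and $\nu$ with uniform projections, so that $w[\rho]=\frac1\pi\nabla|\cdot|*\rho$ by Remark \ref{proiezioni uniformi}.
\item Let $\rho_0$ consist of two small discs of mass $\frac12$, centred at $p=(0,0)$ and $q=(15,4)$.
\item Let $T$ be the translation by $(8,0)$ on the first disc and by $(-8,0)$ on the second.
\end{itemize}
Since $(p-q)\cdot(Tp-Tq)=1>0$, $T$ is cyclically monotone, hence optimal, for small radii. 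As the radii tend to $0$,
\[
J\;\longrightarrow\;\frac1{4\pi}\Bigl(16-\frac{112}{\sqrt{65}}-\frac{16}{\sqrt{17}}\Bigr)\approx-0.14<0 .
\]
So the finite splitting $f'(1)-f'(0)=\frac1dW_2^2-(\text{same-measure part})+J$ cannot give the bound with $\lambda=\frac1d-L$. Your opening claim, that $\frac12W_2^2(\cdot,\nu^\vart)$ is convex along arbitrary non-monotone interpolations, is also false. For symmetric $\mu=\nu^\vart$ and $S=-\mathrm{id}$, the function vanishes at $s=0,1$ and is positive at $s=\frac12$.

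The missing idea is that the favorable sign exists only infinitesimally, when the variation of $v$ is tested against the same flux that produces it. Differentiate $f'(t)=-\int v[\rho_t](T_t(x))\cdot(T(x)-x)\,d\rho_0$ once more. This gives two terms:
\begin{itemize}
\item The transport term $-\int(T-x)\cdot Dv[\rho_t](T_t(x))\,(T-x)\,d\rho_0$ is at least $(\frac1d-L)W_2^2(\rho_0,\rho_1)$. This uses the Lipschitz bound for the current measure $\rho_t\in K$ at the current point $T_t(x)$.
\item The measure-variation term $-\int\partial_tv[\rho_t]\cdot w_t\rho_t$ equals $\fint\int_\R (g^\vart_t(c))^2\,\nu^\vart(T_{t,\vart}(c))^{-1}\,dc\,d\sigma(\vart)\ge0$, a perfect square.
\end{itemize}
This is exactly the paper's proof, with $-I_2\ge(\frac1d-L)W_2^2$ and $I_1\le0$. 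If you apply your own splitting on subgeodesics $[s,s+h]$ and divide by $h^2$, it converges to this decomposition as $h\to0$. So your approach is repaired by working with $f''$ rather than finite differences of $f'$, and your Step 3 then goes through unchanged.
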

\begin{proof}Let $\rho_0,\rho_1$ be two probability densities in $K$, and let $(\rho_t)_{t\geq 0}\subset K$ be the Wasserstein geodesic connecting them. Let $T$ be the optimal transport map from $\rho_0$ to $\rho_1$ and $T_t\coloneqq (1-t)\mathrm{id}+tT$. Then, by Theorem \ref{geodetiche} $\rho_t$ solves \begin{equation*}
    \partial_t \rho_t +\text{div}(w_t\rho_t)=0,
\end{equation*}
where $w_t(x)\coloneqq (T-\mathrm{id})\circ T_t^{-1}(x)$ . We have
\begin{equation*}
    \begin{split}
        \frac{d}{dt}\F_\nu(\rho_t)=& \int \frac{\delta \F_\nu}{\delta \rho}(\rho_t)\partial_t \rho_t=\int \nabla \frac{\delta\F_\nu}{\delta\rho}(\rho_t)\cdot w_t \rho_t = -\int v[\rho_t]\cdot w_t \rho_t,
    \end{split}
\end{equation*}
\begin{equation}
\label{derivata seconda}
    \begin{split}
        \frac{d^2}{dt^2}\F_\nu(\rho_t)=&-\underbrace{\int \partial_t v[\rho_t]\cdot w_t \rho_t}_{\coloneqq I_1} - \underbrace{\int v[\rho_t]\cdot \partial_t(w_t\rho_t)}_{\coloneqq I_2}.
    \end{split}
\end{equation}
The term $I_2$ can be estimated as follows; let $\overline{v}$ be a time independent vector field, with the property that 
\begin{equation*}
    \left\|D\left(\overline{v}(x)+\frac{\mathrm{id}}{d}\right)\right\|_{L^\infty}\leq L.
\end{equation*}
Then,
\begin{equation*}
\label{derivata dist secondo termine}
    \begin{split}
        &\frac{d}{dt}\int \overline{v}\cdot w_t \rho_t=\frac{d}{dt}\int \overline{v}\cdot (T-\mathrm{id})(T_t^{-1}(x))T_{t \#}\rho_0\\ &=\frac{d}{dt}\int \overline{v}(T_t(x))\cdot (T(x)-x)\rho_0
        = \int (T(x)-x)\cdot D \overline v (T_t(x))\,  (T(x)-x)\rho_0\\
        &=\int (T(x)-x)\cdot D\left(\overline{v}+\frac{\mathrm{id}}{d}\right)(T_t(x))(T(x)-x)\,\rho_0-\frac{1}{d}\int |T(x)-x|^2\rho_0\\
        &\leq \left(L-\frac{1}{d}\right)\int |T(x)-x|^2\,\rho_0. 
    \end{split}
\end{equation*}
Taking $\overline v=v[\rho_t]$, we obtain
\begin{equation}
\label{primo termine}
    \begin{split}
        -I_2=-\int v[\rho_t]\cdot \partial_t(w_t \rho_t) \geq \left(\frac{1}{d}-L\right)W^2_2(\rho_0,\rho_1).
    \end{split}
\end{equation}
On the other hand, we claim that $I_1$ is negative. Indeed,
\begin{equation*}
    \begin{split}
        I_1=&\int \partial_t(v[\rho_t])\cdot w_t\rho_t= \int \partial_t\left(\fint_{\mathbb{S}^{d-1}}  T_{t,\vartheta}(x\cdot\vartheta)\vartheta \,d\sigma(\vartheta)\right)\cdot w_t \rho_t\,dx\\
        =&\int\partial_t\left[\fint_{\mathbb{S}^{d-1}}    N_\vartheta^{-1}(R_{t,\vartheta}(x\cdot \vartheta))\vartheta \,d\sigma(\vartheta)  \right] \cdot w_t\rho_t\,dx\\
        =& \int \fint_{\mathbb{S}^{d-1}}  \frac{\partial_t \int_{y\cdot \vartheta \leq x \cdot \vartheta} \rho_t(y)dy}{\nu^\vartheta(T_{t,\vartheta}(x\cdot \vartheta))}\vartheta\,\cdot w_t \rho_t\,d\sigma(\vartheta)\,dx\\
        =&-\int \fint_{\mathbb{S}^{d-1}}  \frac{\int_{y\cdot \vartheta \leq x \cdot \vartheta}\text{div}(w_t\rho_t)\,dy}{\nu^\vartheta(T_{t,\vartheta}(x\cdot \vartheta))}\vartheta\,\cdot w_t \rho_t\,d\sigma(\vartheta)\,dx.
    \end{split}
    \end{equation*}
By Fubini, we change the order of integration and perform the integration on level sets $x\cdot \vart =c$:
\begin{equation}
\begin{split}
    \label{secondo termine}
        I_1=&-\fint_{\mathbb{S}^{d-1}} \int_{-\infty}^{+\infty} \frac{1}{\nu^\vartheta(T_{t,\vartheta}(c))} \left(\int_{y\cdot \vartheta \leq c}\text{div}(w_t\rho_t)\,dy\right) \left(\int_{x\cdot \vartheta=c}\vartheta\,\cdot w_t \rho_t\, \,dx\right)dc\,d\sigma(\vartheta)\\
        =&-\fint_{\mathbb{S}^{d-1}} \int_{-\infty}^{+\infty} \frac{1}{\nu^\vartheta(T_{t,\vartheta}(c))} \bigg(\int_{x\cdot \vartheta=c}\vartheta\,\cdot w_t \rho_t\,dx\bigg)^2\,d\sigma(\vartheta)\, dc\leq 0.
    \end{split}
\end{equation}
Combining \eqref{derivata seconda}, \eqref{primo termine} and \eqref{secondo termine}, we obtain
\begin{equation*}
    \begin{split}
        \frac{d^2}{dt^2}\F_\nu(\rho_t)\geq\left(\frac{1}{d}-L\right)W^2_2(\rho_0,\rho_1).
    \end{split}
\end{equation*}
\end{proof}
\section{Lipschitz bounds on $v[\rho]$}
\label{S: Lip bounds}
In this Section, we present two results, both giving bounds to the Lipschitz constant of $v$. First, we state and prove Lemma \ref{lemma 2}. This result shows that $\mathscr{F}_\nu$ is a $\lambda$-convex functional on the set of measures with bounded $L^p$ norm if $\nu$ has projections bounded away from zero. Then, we state Theorem \ref{Main theorem}, which instead proves $\lambda$-convexity on the set of measures with bounded $L^\infty$ norm, provided that $\nu$ satisfies a certain weaker condition, involving bounds on the density of $\nu$ itself and not of its projections. Indeed, we will see that lower bounds on the projections are very demanding: if $\nu$ is bounded from below and above on a bounded domain, most of the densities of its projections will vanish on the boundary of their support, because the measure of the section tends to $0$. Even more strikingly, the existence of measures with lower bounds on the densities of the projections is very restrictive, and is essentially a two- or three-dimensional affair. The precise conditions on $\nu$ will then be the object of study in the last part of this Section, where we will present some remarks and provide examples.
\subsection{Lipschitz bounds if the projections of $\nu$ are bounded from below}
\begin{lemma}
\label{lemma 2}
    Let $v[\rho]$ be defined by \eqref{velocity field}, $\nu \in \mathcal{P}_2(\R^d)$, and $K=\{\rho \in \mathscr{P}_2(\R^d)\colon \|\rho\|_{L^p}\leq M\}$, for some $p>d'$, $M>0$.
    Assume that, for almost every $\vartheta\in\mathbb S^{d-1}$, the support of $\nu^\vartheta$ is an interval $I_\vartheta$, and that $\nu^\vartheta\geq c_0>0$ almost everywhere on $I_\vartheta$.
    Then, there exists $L=L(c_0,M,p,d)$ such that, for any $\rho \in K$,
    $$\text{Lip}\left(v[\rho]+\frac{\mathrm{id}}{d}\right)\leq L,$$ and $\mathscr{F}_\nu$ is $\lambda$-geodesically convex on $K$ with $\lambda=\frac{1}{d}-L$.
\end{lemma}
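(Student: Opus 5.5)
The plan is to reduce everything to Lemma \ref{Lemma 1}. We establish a uniform Lipschitz bound for the map $V[\rho](x):=v[\rho](x)+\frac{x}{d}$ on $K$, and we check that $K$ is geodesically convex.

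For the second point we use McCann's displacement convexity of $\rho\mapsto\int\rho^p$ for $p\geq1$. Along a Wasserstein geodesic $\rho_t$ this gives $\|\rho_t\|_{L^p}^p\leq(1-t)\|\rho_0\|_{L^p}^p+t\|\rho_1\|_{L^p}^p\leq M^p$, so $K$ is geodesically convex.

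For the first point we start from the identity $\fint_{\mathbb S^{d-1}}(x\cdot\vart)\vart\,d\sigma(\vart)=\frac{x}{d}$. It gives
\begin{equation*}
V[\rho](x)=\fint_{\mathbb S^{d-1}}T^\vart(x\cdot\vart)\,\vart\,d\sigma(\vart),\qquad T^\vart=N_\vart^{[-1]}\circ R_\vart
\end{equation*}
by Theorem \ref{brenier 1d}. The assumption on $\nu^\vart$ (support an interval $I_\vart$, density $\geq c_0$ on $I_\vart$) makes $N_\vart$ strictly increasing on $I_\vart$ with slope at least $c_0$. Hence $N_\vart^{[-1]}$ is $\frac1{c_0}$-Lipschitz on $(0,1)$, and extends as such to $[0,1]$ with values in $\overline{I_\vart}$. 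Consequently, for $x,y\in\R^d$,
\begin{equation*}
|V[\rho](x)-V[\rho](y)|\leq\frac1{c_0}\fint_{\mathbb S^{d-1}}|R_\vart(x\cdot\vart)-R_\vart(y\cdot\vart)|\,d\sigma(\vart).
\end{equation*}
Write $z_s=(1-s)y+sx$. Then $R_\vart(x\cdot\vart)-R_\vart(y\cdot\vart)=\int_0^1\rho^\vart(z_s\cdot\vart)\,(x-y)\cdot\vart\,ds$. This is valid since $R_\vart$ is absolutely continuous with density $\rho^\vart$ for every $\vart$, because $\rho\in L^1$. By Fubini the right-hand side above is bounded by
\begin{equation*}
\frac{|x-y|}{c_0}\,\sup_{z\in\R^d}\fint_{\mathbb S^{d-1}}\rho^\vart(z\cdot\vart)\,d\sigma(\vart).
\end{equation*}
The bound needs this supremum to hold for every $z$, or at least for a.e.\ $z$ with the exceptional set handled by approximating $\rho$ by smooth densities in $K$.

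The key step is then the classical identity for averages of hyperplane integrals through a point:
\begin{equation*}
\fint_{\mathbb S^{d-1}}\int_{\{w\cdot\vart=z\cdot\vart\}}\rho\,d\mathcal H^{d-1}\,d\sigma(\vart)=c_d\int_{\R^d}\frac{\rho(w)}{|w-z|}\,dw .
\end{equation*}
We prove it by writing $w=z+r\omega$ in polar coordinates. We then use that, for fixed $\omega$, the $\sigma$-measure of $\vart$ with $|\omega\cdot\vart|<\eps$ is $\sim\eps$; equivalently, the slab of width $\eps$ through $z$ catches the shell at radius $r$ with relative proportion $\sim\eps/r$.

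Having the identity, we split the integral over $B_1(z)$ and its complement. H\"older's inequality gives
\begin{equation*}
\int_{B_1(z)}\frac{\rho(w)}{|w-z|}\,dw\leq M\,\big\||\cdot|^{-1}\big\|_{L^{p'}(B_1)},
\end{equation*}
which is finite exactly when $p'<d$, i.e.\ $p>d'$. The outer part is at most $\int\rho=1$. This yields $L=\frac{c_d}{c_0}\big(M\||\cdot|^{-1}\|_{L^{p'}(B_1)}+1\big)$, and Lemma \ref{Lemma 1} concludes with $\lambda=\frac1d-L$.

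The main obstacle I expect is the rigorous justification of the hyperplane-average identity with the right constant, together with its use pointwise. The projection density $\rho^\vart$ is only defined a.e.\ in its variable, so I would first prove everything for continuous compactly supported $\rho$, where every quantity is pointwise meaningful. I would then pass to general $\rho\in K$ by density, using that the Lipschitz bound is stable under $L^1$ convergence of $\rho$, since $R_\vart$ converges uniformly for every $\vart$.
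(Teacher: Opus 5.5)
Your proposal is correct and follows essentially the same route as the paper. Both reduce to Lemma \ref{Lemma 1}, bound the Lipschitz constant by $\frac{1}{c_0}\sup_z\fint_{\mathbb{S}^{d-1}}\rho^\vartheta(z\cdot\vartheta)\,d\sigma(\vartheta)$, convert this via the back-projection identity into $c_d\int\rho(y)|z-y|^{-1}\,dy$, and control that by H\"older on a ball plus the unit mass outside. The differences are minor: you use difference quotients and the $\frac{1}{c_0}$-Lipschitz continuity of $N_\vartheta^{[-1]}$ rather than differentiating $T_\vartheta$, which is if anything more rigorous. You also split at radius $1$ instead of optimizing over $R$, which gives a valid but cruder constant, affine in $M$ rather than the paper's $L\propto M^{p'/d}/c_0$.
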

\begin{proof}
    Deriving the relation (see Theorem \ref{brenier 1d})
        \begin{equation*}
        N_\vartheta(T_\vartheta(r))=R_\vartheta(r) \quad \text{for any }r\in \text{spt }\rho^\vart,
\end{equation*}
we deduce that
\begin{equation*}
    T'_\vart(r)=\frac{\rho^\vart(r)}{\nu^\vart(T_\vart(r))}.
\end{equation*}
Therefore,
    \begin{equation*}
    \begin{split}
        \left|Dv[\rho](x)+\frac{\mathrm{id}}{d}\right|&= \,\left|\fint_{\mathbb{S}^{d-1}} T_{\vartheta}'(x\cdot \vartheta)\vartheta \otimes \vartheta\right| \leq \fint_{\mathbb{S}^{d-1}} \bigg|T_{\vartheta}'(x\cdot \vartheta)\bigg|d\sigma(\vartheta)\\
        &\leq \fint_{\mathbb{S}^{d-1}} \frac{\rho^\vartheta(x\cdot \vartheta)}{\nu^\vart(T_{\vartheta}(x\cdot \vartheta))}\,d\sigma(\vartheta)\leq \frac{1}{c_0}\fint_{\mathbb{S}^{d-1}} \rho^\vartheta(x\cdot \vartheta)\,d\sigma(\vartheta)\\
        &=\frac{1}{c_0}\fint_{\mathbb{S}^{d-1}}\int_{e\cdot\vartheta=0}\int_0^{+\infty}\rho(x+er)r^{d-2}\,dr\,d\mathcal H^{d-2}(e)\,d\sigma(\vart)=\frac{(d-1)\omega_{d-1}}{d\omega_d c_0}\int \frac{\rho(y)}{|x-y|}\,dy,
    \end{split}
    \end{equation*}
where we used \cite[Lemma 5.1]{cozzi2025long} to obtain the last equality.
Moreover, if $\rho \in L^p$ for $p>d'$, we have
\begin{equation}
\label{rho/x stima Lp}
    \begin{split}
        &\int_{\R^d} \frac{\rho(y)}{|x-y|}\,dy\leq\int_{B_R(x)}\frac{\rho(y)}{|x-y|}\,dy+\int_{B_R^c(x)}\frac{\rho(y)}{|x-y|}\,dy\\
        &\leq \left(\int_{B_R}\frac{1}{|y|^{p'}}\,dy\right)^{\frac{1}{p'}}\left(\int_{\R^d}\rho^p\,dx\right)^{\frac{1}{p}}+\frac{1}{R} \leq c(d,p)R^{\frac{d}{p'}-1}\|\rho\|_{L^p}+\frac{1}{R}.
    \end{split}
\end{equation}
Optimizing over $R>0$, we choose $R=\|\rho\|_{L^p}^{-\frac{p'}{d}}(c(d,p)(-1+d/p'))^{-\frac{p'}{d}}$, thus obtaining
\begin{equation}
\label{rho/x stima Lp,2}
    \frac{(d-1)\omega_{d-1}}{d\omega_d c_0}\int_{\R^d}\frac{\rho(y)}{|x-y|}\,dy\leq \frac{(d-1)\omega_{d-1}M^{\frac{p'}{d}}}{d\omega_d c_0}\frac{d}{d-p'}
\left(\frac{c(d,p)(d-p')}{p'}\right)^{\frac{p'}{d}}=: L
\end{equation}
for any $\rho \in K$. Since $K$ is geodesically convex (see, for example \cite[Chapter 7]{santambrogio2015optimal}), we can apply Lemma \ref{Lemma 1} and deduce the claim.
\end{proof}
\subsection{Remarks about measures with projections bounded from below}
We collect here some remarks regarding measures $\nu\in \mathcal{P}_2(\R^d)$ satisfying the condition (appearing in Lemma \ref{lemma 2}) that $\nu^\vart(r)\geq c_0>0$ for a.e. $\theta \in \mathbb{S}^{d-1}$. We will also focus on the case where the $\nu^\vart$ are the uniform measure on a same interval.
\begin{remark}
    \label{proiezioni limitate}
    We do not expect that measures with the property that $\nu^\vart\geq c_0>0$ for a.e. $\vart \in \mathbb{S}^{d-1}$ exist in $\R^d$ for $d>3$. We will show in this remark that this is the case when we strengthen condition \eqref{eq: bounded projections} replacing it with the following property (which in turn implies \eqref{eq: bounded projections}): for almost every $\vart \in \mathbb{S}^{d-1}$ there exists an interval $I_\vart$ such that
    \begin{equation}
        \label{stronger condition bounded projections}
        \begin{split}
        \nu^\vart\geq c_0 \quad&\text{ on }I_\vart\\
        \nu^\vart=0 \quad &\text{ on }\R\setminus I_\vart \\
        C\coloneqq \bigcap_{\vart \in \mathbb{S}^{d-1}} \{x\cdot \vart \in I_\vart\} \quad &\text{ is uniformly convex}
        \end{split}
    \end{equation}
    Indeed, for any $\vart \in \mathbb{S}^{d-1}$, let $r_\vart=\sup_{x\in C}{x\cdot \vart}$ and $x_\vart \in \partial C$ be the boundary point such that $x_\vart\cdot \vart =r_\vart$. Then, consider $S_{\vart,\eps}=\{x \in C \colon |x\cdot\vart-r_\vart|\leq\eps\}$, which is a slice of size $\eps>0$ containing $x_\vart$ in its boundary. Since, by assumption, $C$ is uniformly convex and smooth (notice that without the assumption it would just be convex), given any $\eps>0$, for any ball $B_{\sqrt{\eps}}(x_\vart)$, it holds $S_{\vart,\eps}\subset B_{\sqrt{\eps}}(x_\vart)$. By \eqref{stronger condition bounded projections}, we have that $\nu(S_{\vart, \eps})\gtrsim \eps$, and hence $\nu(B_{\sqrt{\eps}}(x_\vart))\gtrsim \eps$. We now consider a collection of points $\{y_i\}\subset \partial C$ such that $|y_i-y_j|\geq 2\sqrt{\eps}$. The number of such points is of the order $(\sqrt{\eps})^{-(d-1)}$, since $\partial C$ is a $d-1$ dimensional surface. Define $(\partial C)_\eps\coloneqq \{x\in \R^d\colon \text{dist}(\partial C, x)\leq \eps\}$. Considering the above argument for any ball $B_{\sqrt{\eps}}(y_i)$ we obtain that
    \begin{equation*}
    \label{catena disuguaglianze epsilon arbitrario}1=\nu(C)\geq \nu((\partial C)_\eps)\geq \sum_{i=1}^{\eps^{-\frac{d-1}{2}}} \nu(B_{\sqrt{\eps}}(y_i))\gtrsim \sum_{i=1}^{\eps^{-\frac{d-1}{2}}} \eps\gtrsim \eps^{\frac{-d+3}{2}},\end{equation*}
    where we denoted by $\gtrsim$ inequalities holding up to multiplication by a dimensional constant. By arbitrariness of $\eps$, we deduce that it is necessary to assume $d\leq 3$. Moreover, when $d=3$, we also observe that $\nu$ must give positive mass to $\partial C$.
\end{remark}
\begin{remark}
\label{proiezioni uniformi}
The measure $\nu$ with uniform projections (i.e.$, \nu^\vartheta(r)=\chi_{[-1/2,1/2]}(r)$ for any $\vartheta\in \mathbb{S}^{d-1}$) satisfies \eqref{stronger condition bounded projections}: by Remark \ref{proiezioni limitate} such a measure only exists when $d=2,3$, and in this case the set $C$ mentioned there coincides with the closed ball of radius $1/2$ centered at the origin. More precisely, for $d=3$ the measure $\nu$ is the uniform probability measure on the sphere of radius $1/2$, and for $d=2$ it is an absolutely continuous measure with density $\nu(x)=\frac{1}{\pi\sqrt{1/4-|x|^2}}$ for $|x|<1/2$, and zero outside this ball.
One can check that in this case we have
\begin{equation*}
    \begin{split}
        \frac{SW_2^{2}(\rho,\nu)}{2}=\int \frac{|x|^2}{2d}\rho(x)\,dx-\frac{\omega_{d-1}}{2d\omega_{d}}\iint \rho(x)\rho(y)|x-y|\,dx\,dy+\frac{1}{24} \hspace{5mm} \text{for any }\rho \in \mathcal{P}_2(\R^d).
    \end{split}
\end{equation*}
For $\mathcal{V}(\rho)=\int V(x)\,d\rho(x)$ and $\mathcal{W}(\rho)=\iint W(x-y)\,d\rho(x)\,d\rho(y)$, we have that \begin{equation*}
    \frac{\delta \mathcal{V}}{\delta \rho}(\rho)=V(x),\quad \frac{\delta \mathcal{W}}{\delta\rho}(\rho)=2W\ast \rho.
\end{equation*}
 Hence, the velocity field of the associated SWF is
\begin{equation}
\begin{split}
\label{velocità proiezioni uniformi}
    v[\rho](x)=-\frac{x}{d}+\frac{\omega_{d-1}}{d\omega_{d}}\nabla W\ast \rho(x),
\end{split}
\end{equation}
where 
\begin{equation}
    \label{W}
    W(z)=|z|.
\end{equation}
\end{remark}
\subsection{Lipschitz bounds when $\nu$ satisfies Assumption \eqref{assumption nu}}
As usual, in the following result we set $\nu^\vartheta:=(\pi^\vartheta)_\#\nu$ (and identify it with its density) and denote by $N_\vartheta(r)$ the cumulative distribution of $\nu^\vartheta$, \textit{i. e.}, $N_\vartheta(r)=\int_{-\infty}^{r}\nu^\vartheta(s)\,ds$.
\begin{theorem} 
\label{Main theorem}
    Let $M>0$, $K=\{\rho \in \mathscr{P}_2(\R^d)\colon \|\rho\|_{L^\infty}\leq M\}$, and $\nu \in \mathscr{P}_2(\R^d)$.
    Assume that there exist $0<\alpha<(d-1)/d, \kappa>0$
    such that\\
\begin{equation}
\begin{split}
\label{assumption nu}
\nu^\vartheta(r)\geq  \kappa\left(\min \{N_\vartheta(r),1-N_\vartheta(r)\}\right)^\alpha
\hspace{3mm} &\text{for a.e. } \vartheta \in \mathbb{S}^{d-1}\text{ and every }r\in\R.
\end{split}
\end{equation}  
Then, $\mathscr{F}_\nu$ is $\lambda$-geodesically convex on $K$ with $$\lambda=\frac{1}{d}-\frac{c(d)}{\left(1-\frac{\alpha d}{d-1}\right)\kappa}M^{\frac{1}{d}},$$
where $c(d)$ is a constant depending only on $d$.
\end{theorem}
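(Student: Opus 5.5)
By Lemma \ref{Lemma 1}, and since $K$ is geodesically convex, it suffices to show that for every $\rho\in K$
\[
\text{Lip}\Big(v[\rho]+\frac{\mathrm{id}}{d}\Big)\leq \frac{c(d)}{\big(1-\frac{\alpha d}{d-1}\big)\kappa}M^{\frac1d}.
\]
As in the proof of Lemma \ref{lemma 2}, differentiating $N_\vart(T_\vart(r))=R_\vart(r)$ gives $T_\vart'(r)=\rho^\vart(r)/\nu^\vart(T_\vart(r))$. Hence
\[
\Big|Dv[\rho](x)+\frac{\mathrm{id}}{d}\Big|\leq \fint_{\mathbb S^{d-1}}\frac{\rho^\vart(x\cdot\vart)}{\nu^\vart(T_\vart(x\cdot\vart))}\,d\sigma(\vart).
\]
Applying \eqref{assumption nu} at the point $T_\vart(x\cdot\vart)$, and using $N_\vart(T_\vart(r))=R_\vart(r)$, the denominator is bounded below by $\kappa\,m_\vart(x)^\alpha$, where $m_\vart(x):=\min\{R_\vart(x\cdot\vart),1-R_\vart(x\cdot\vart)\}$. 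So the target quantity becomes
\[
\frac1\kappa\fint_{\mathbb S^{d-1}}\rho^\vart(x\cdot\vart)\,m_\vart(x)^{-\alpha}\,d\sigma(\vart),
\]
which involves only $\rho$, and the task is to bound it by $C(d)(1-\frac{\alpha d}{d-1})^{-1}M^{1/d}$ uniformly in $x$.

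\textbf{One-dimensional estimates.} Write $r=x\cdot\vart$ and let $h_\vart(s)=\rho^\vart(r+s)$, the mass of $\rho$ on the hyperplane $\{y\cdot\vart=r+s\}$. Since $\|\rho\|_\infty\le M$, the mass of $\rho$ in the slab $\{0\le y\cdot\vart-r\le s\}$ intersected with $B_\delta(x)$ is at most $M\omega_{d-1}\delta^{d-1}s$. Thus $m_\vart(x)$ measures the mass of a half-space bounded by the hyperplane through $x$ orthogonal to $\vart$. I will use two estimates.
\begin{itemize}
\item[(a)] As in Lemma \ref{lemma 2}, $\fint\rho^\vart(x\cdot\vart)\,d\sigma=c_d\int\rho(y)|x-y|^{-1}dy\le C(d)M^{1/d}$. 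This is \eqref{rho/x stima Lp} with $p=\infty$, optimized over $R$ with $R\sim M^{-1/d}$.
\item[(b)] A lower bound on $m_\vart(x)$ in terms of the angular geometry. The mass of $\rho$ in the half-space $\{(y-x)\cdot\vart\ge0\}$ is at least the mass of $\rho$ in a cone around $\vart$.
\end{itemize}
Estimate (b) is not pointwise, so the cleaner route is a layer-cake argument in $\vart$. Let $\mu$ be the probability measure $\rho$ viewed from $x$. I would show the \emph{level-set estimate}
\[
\sigma\big(\{\vart:\ m_\vart(x)\le \eta\}\big)\ \text{weighted by}\ \rho^\vart(x\cdot\vart)\ \lesssim\ M^{1/d}\,\eta^{\frac{d-1}{d}}.
\]
Heuristically, the hyperplanes through $x$ that cut off mass at most $\eta$ form a set of directions which, weighted by the section density, behaves like a region of "dimension" controlled by the $L^\infty$ bound. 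The exponent $(d-1)/d$ is exactly what makes $\int_0^1\eta^{-\alpha}\,d(\eta^{(d-1)/d})$ finite precisely when $\alpha<(d-1)/d$. That finiteness produces the factor $(1-\frac{\alpha d}{d-1})^{-1}$.

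\textbf{The level-set estimate.} To prove it, write
\[
\fint\rho^\vart(x\cdot\vart)\,\chi_{\{m_\vart\le\eta\}}\,d\sigma=c_d\int\frac{\rho(y)}{|x-y|}\,\chi_{\{m_{\vart(y)}\le\eta\}}\,dy
\]
via the change of variables of \cite[Lemma 5.1]{cozzi2025long}, where $\vart$ ranges over directions orthogonal to $y-x$. Then bound the right-hand side by splitting $|x-y|<R$ and $|x-y|\ge R$. Near $x$, use $\|\rho\|_\infty\le M$ together with the fact that the relevant directions $\vart$ form a set of small measure. Far from $x$, use $|x-y|^{-1}\le R^{-1}$ and the fact that the mass in thin half-spaces is at most $\eta$. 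For that mass bound I would use a rearrangement argument: among densities with $\|\rho\|_\infty\le M$ and prescribed half-space masses, the worst case is a uniform density $M$ on a region adjacent to the hyperplane. For such a density, mass $\eta$ corresponds to a width of order $(\eta/M)^{1/d}$, which gives $\rho^\vart\lesssim M^{1/d}\eta^{(d-1)/d}$.

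Combining the level-set estimate with the layer-cake formula $m^{-\alpha}=\alpha\int_0^\infty\eta^{-\alpha-1}\chi_{\{m<\eta\}}\,d\eta$, together with (a) for the range $\eta\ge1/2$, yields the claimed Lipschitz bound. Lemma \ref{Lemma 1} then concludes.

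\textbf{Main obstacle.} The hardest step is the level-set estimate, that is, controlling $\rho^\vart(x\cdot\vart)$ when the hyperplane through $x$ is near the edge of $\rho^\vart$ and the section mass is small. I would first attempt the direct pointwise 1D bound
\[
\rho^\vart(r)\le C\,M^{1/d}\,\big(R_\vart(r)\big)^{(d-1)/d}.
\]
This is the $L^\infty$ isoperimetric-type inequality: a hyperplane section of large area forces a large adjacent mass. If it holds, the proof reduces to
\[
\rho^\vart\,m^{-\alpha}\le CM^{1/d}\,m^{\frac{d-1}{d}-\alpha}\le CM^{1/d},
\]
which needs no layer-cake argument. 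The factor $(1-\frac{\alpha d}{d-1})^{-1}$ would then have to come from a refined version of the argument that handles sections which are thin but spread out.
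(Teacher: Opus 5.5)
\textbf{There is a genuine gap.} Your reduction matches the paper's: Lemma \ref{Lemma 1}, the formula $T_\vart'=\rho^\vart/\nu^\vart(T_\vart)$, and \eqref{assumption nu} applied at $T_\vart(x\cdot\vart)$. Your layer-cake bookkeeping is also correct. If you had
\[
\fint_{\mathbb S^{d-1}}\rho^\vart(x\cdot\vart)\,\chi_{\{R_\vart(x\cdot\vart)\le\eta\}}\,d\sigma(\vart)\lesssim M^{\frac1d}\eta^{\frac{d-1}{d}},
\]
the factor $(1-\frac{\alpha d}{d-1})^{-1}$ would come out exactly as you describe. But this level-set estimate is the entire content of the theorem, and you do not prove it.

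Both mechanisms you offer reduce to a bound at a \emph{fixed} hyperplane, $\rho^\vart(x\cdot\vart)\lesssim M^{1/d}R_\vart(x\cdot\vart)^{(d-1)/d}$, and this bound is false. Take $\rho=M\chi_{B_\delta}$ and $x\in\partial B_\delta$, and let $\phi$ be the angle between $\vart$ and the inner normal at $x$. Then
\[
\rho^\vart(x\cdot\vart)\sim M\delta^{d-1}\phi^{d-1},\qquad R_\vart(x\cdot\vart)\sim M\delta^{d}\phi^{d+1},
\]
so $\rho^\vart/R_\vart^{(d-1)/d}\sim M^{1/d}\phi^{-(d-1)/d}\to\infty$ as $\phi\to0$. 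In fact $\rho^\vart/R_\vart^{\alpha}$ is itself unbounded in $\vart$ once $\alpha>\frac{d-1}{d+1}$, and only its $\vart$-average is bounded.

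Your rearrangement step also assumes that the mass adjacent to the hyperplane sits in a ball-like region of width $(\eta/M)^{1/d}$. It can instead be a pancake. Take $\rho=M\chi_{[-\eps,1]\times B^{(d-1)}_a}$ with $M\omega_{d-1}a^{d-1}(1+\eps)=1$, $x=0$ and $\vart=e_1$. Then $R_\vart(0)\approx\eps$ while $\rho^\vart(0)\approx 1$.

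Similarly, the set of directions with small half-space mass need not be small in measure. For $x\notin\mathrm{spt}\,\rho$ it contains a fixed open cone for every $\eta$. Only its $\rho^\vart$-weighted measure is small, and that is exactly what needs proof.

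\textbf{The missing idea is the paper's angular one.} Work in polar coordinates at $x$. Then $\rho^\vart(x\cdot\vart)=\int_{e\perp\vart}m_x(e)\,d\mathcal H^{d-2}(e)$ with $m_x(e)=\int_0^\infty\rho(x+re)r^{d-2}dr$. By contrast, $R_\vart(x\cdot\vart)$ integrates $\int_0^\infty\rho(x+re)r^{d-1}dr$ over the hemisphere $\{e\cdot\vart\le0\}$.

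A bathtub argument along each \emph{ray} from $x$, not across the hyperplane, gives $\int_0^\infty\rho(x+re)r^{d-1}dr\ge c_dM^{-\frac1{d-1}}m_x(e)^{\frac d{d-1}}$. By Jensen, $R_\vart\gtrsim M^{-\frac1{d-1}}\Phi(\vart)^{\frac d{d-1}}$ with $\Phi(\vart):=\int_{e\cdot\vart\le0}m_x\,d\sigma$. This is a pointwise bound, but on the hemisphere integral $\Phi$, not on the equator integral $\rho^\vart$.

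The two are linked because the equator integral is the rate of change of the hemisphere integral as $\vart$ rotates in a 2-plane. For $d=2$ it is literally $|\varphi'|$ for two piecewise monotone functions $\varphi$ with $\varphi\le\Phi$. For $d\ge3$ this holds after averaging over orthonormal pairs $(e_1,e_2)$ and applying a coarea formula.

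The paper then uses $\int|\varphi'|\varphi^{-\gamma}\,dt\le\frac1{1-\gamma}TV(\varphi^{1-\gamma})$ with $\gamma=\frac{d\alpha}{d-1}$. Together with $\max\varphi\le\int\rho(y)|x-y|^{-1}dy\lesssim M^{1/d}$, this gives the bound. So the factor $(1-\frac{\alpha d}{d-1})^{-1}$ comes from integrating a derivative.

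The same structure completes your route. On each monotone piece, $\int|\varphi'|\chi_{\{\varphi\le\eps\}}dt\le\eps$, hence $\fint\rho^\vart\chi_{\{\Phi\le\eps\}}\,d\sigma\lesssim\eps$. Combined with $R_\vart\le\eta\Rightarrow\Phi(\vart)\lesssim M^{1/d}\eta^{(d-1)/d}$, this is precisely your level-set estimate. Your layer-cake argument then goes through, but only via this derivative structure, never via a pointwise rearrangement at a fixed hyperplane.
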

We postpone the proof of Theorem \ref{Main theorem} in Section \ref{sec: main theorem}. Rather, here we provide some remarks on condition \eqref{assumption nu}.

\begin{remark} If $\nu$ satisfies \eqref{assumption nu}, then $\nu$ must be compactly supported. 
Indeed, since $N_\vart'=\nu^\vart$, assumption \eqref{assumption nu} is equivalent to 
\begin{equation*}
    N'_\vart(r)\geq \kappa\min\{(1-N(r))^\alpha,N(r)^\alpha\}.
\end{equation*}
Moreover, since $\lim_{r\to +\infty} N(r)=1$, for $r$ large enough this leads to 
\begin{equation}
\label{equation not bounded solutions}
    N'_\vart(r)\geq \kappa(1- N_\vart(r))^\alpha.
\end{equation}
This in turn gives $(1-N_\vart)^{1-\alpha}\leq -c<0$, and it is not possible to have a nonnegative solution of this differential 
inequality on an unbounded domain for $\alpha <1$. 
\end{remark}
\begin{remark}
    We could relax assumption \eqref{assumption nu} by asking that $\nu^\vartheta(r)\geq \kappa g(\min\{N_\vartheta(r),1-N_\vartheta(r)\})$ where $g(t)$ satisfies the following properties: 
    \begin{enumerate}
        \item $g(ct)\geq c^\alpha g(t)$, for some $\alpha \in \R$,
        \item $\frac{1}{g\left(t^{\frac{d}{d-1}}\right)}\in L^1_{\text{loc}}$.
    \end{enumerate}
\end{remark}
\subsection{Examples of measures satisfying \eqref{assumption nu}}
\begin{example} (Projections bounded from below) Let $\nu$ be compactly supported with the property that $\text{spt }\,\nu^\vart$ is an interval $I_\vart$ for any $\vart \in \mathbb{S}^{d-1}$, and there exists $\kappa>0$ such that \begin{equation*}
    \label{projection bounded form below}
\nu^{\vartheta}(r)\geq c_0
\end{equation*}
for almost any $\vartheta \in \mathbb{S}^{d-1}$, $r\in I_\vart$.
Then one trivially has
\begin{equation}
    \label{eq: bounded projections}
    \nu_\vartheta(r)\geq c_0 N^{\alpha}_{\vartheta}(r),
\end{equation}
since $N_{\vartheta}(r)\leq 1$, for any $\alpha \in [0,(d-1)/d)$.
\end{example}
\begin{example}
\label{example prescribed behavior near boundary}(Prescribed behavior near the boundary of their convex support) The following Lemma, whose proof is presented at the end of this list, holds true
\begin{lemma}
\label{boundary of convex lemma}
Let $\nu\in \mathscr{P}_2(\R^d)$ be such that $\text{spt }(\nu)\subset C\subset \R^d$ where $C$ is a uniformly convex, smooth set. For any $r>0$, let $C_{r}\coloneqq \{x\in C \colon \text{dist}(x,\partial C)\leq r\}$. Assume there exist $r_0, c_1,c_2>0$ and $\beta_1\geq\beta_2>-1$ with 
\begin{equation}
\label{condition on exponents}
    0<\frac{\frac{d-1}{2}+\beta_1}{\frac{d+1}{2}+\beta_2}<\frac{d-1}{d},
\end{equation}
such that the restriction of $\nu$ to $C_{r_0}$ is absolutely continuous with respect to Lebesgue measure, with density satisfying 
\begin{equation}
\begin{split}
\label{inequality nu}
    c_1 \text{ dist}(x, \partial C)^{\,\beta_1}\leq\nu(x)\leq c_2\text{ dist}(x, \partial C)^{\,\beta_2 }\hspace{5mm} \text{for any }\hspace{1mm} x\in C_{r_0}.
\end{split}
\end{equation}
Then, $\nu$ satisfies \eqref{assumption nu}.
\end{lemma}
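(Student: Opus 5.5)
The idea is to compare, for each direction $\vartheta$ and each level $r$ near an endpoint of the support of $\nu^\vartheta$, the density $\nu^\vartheta(r)$ with the mass $N_\vartheta(r)$. Both are computed from the slices of $C$ near the boundary. By symmetry, replacing $\vartheta$ with $-\vartheta$, it suffices to treat the lower tail $N_\vartheta(r)\le 1/2$.

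Let $a_\vartheta=\min_{x\in C}x\cdot\vartheta$ and set $s=r-a_\vartheta$.

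\textbf{Small slabs.} Since $C$ is smooth and uniformly convex, its principal curvatures are bounded between two positive constants. For $s$ small (uniformly in $\vartheta$), the slice $\{x\in C: x\cdot\vartheta=r\}$ is comparable to a $(d-1)$-ball of radius $\sim\sqrt{s}$. On this slice, $\mathrm{dist}(x,\partial C)$ ranges between $0$ and a quantity of order $s$. Parametrize the slice by $y\in B^{(d-1)}_{c\sqrt s}$. Up to constants, $\mathrm{dist}(x,\partial C)\simeq s-|y|^2$ on a comparable region. Then \eqref{inequality nu} gives
\begin{equation*}
\nu^\vartheta(r)\ \ge\ c_1\int_{|y|\lesssim\sqrt s}(s-|y|^2)^{\beta_1}\,dy\ \gtrsim\ s^{\frac{d-1}{2}+\beta_1},
\end{equation*}
and similarly $\nu^\vartheta(r)\lesssim s^{\frac{d-1}{2}+\beta_2}$. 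Both integrals converge because $\beta_i>-1$. Integrating in $r$ yields
\begin{equation*}
N_\vartheta(r)\lesssim s^{\frac{d+1}{2}+\beta_2}.
\end{equation*}
Combining the two bounds,
\begin{equation*}
\nu^\vartheta(r)\gtrsim s^{\frac{d-1}{2}+\beta_1}\gtrsim N_\vartheta(r)^{\alpha},\qquad \alpha=\frac{\frac{d-1}{2}+\beta_1}{\frac{d+1}{2}+\beta_2}.
\end{equation*}
By \eqref{condition on exponents}, $\alpha\in(0,(d-1)/d)$. This gives \eqref{assumption nu} for $s\le s_0$, with constants uniform in $\vartheta$.

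\textbf{Middle region.} This covers $s\ge s_0$ with $N_\vartheta\le 1/2$, and likewise near the upper endpoint. Here $\min\{N,1-N\}\le 1$, so it suffices to show $\nu^\vartheta(r)\ge c>0$ whenever $r$ lies at distance at least $s_0$ from both endpoints. The slice then contains a $(d-1)$-dimensional region lying in $C_{r_0}$ (near its rim) of measure bounded below uniformly, because $C$ is uniformly convex and bounded. On that region the density is at least $c_1 r_0'^{\beta_1}$ by \eqref{inequality nu}, for some fixed $r_0'$. So a uniform lower bound follows. Note that \eqref{inequality nu} is only imposed on $C_{r_0}$, but every interior slab meets the collar $C_{r_0}$ in a set of uniformly positive $\mathcal H^{d-1}$-measure, so the lower bound on $C_{r_0}$ suffices. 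Outside $[a_\vartheta,b_\vartheta]$ the inequality \eqref{assumption nu} is trivial, since there $\min\{N,1-N\}=0$.

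\textbf{Main obstacle.} The delicate step is the uniform geometric comparison in the small-slab regime. One must show that $\mathrm{dist}(x,\partial C)$ on the cap is comparable to $s-|y|^2$ with constants independent of $\vartheta$. I would obtain this by sandwiching $C$ near $x_\vartheta$ between two balls tangent at $x_\vartheta$, with radii given by the curvature bounds. One also needs a lower bound on the distance on a sub-cap of comparable measure, for instance the points with $|y|\le\sqrt{s}/2$, where the distance is $\gtrsim s$. For the upper bound with $\beta_2<0$, the singular part near the rim must be checked to remain integrable, which again uses $\beta_2>-1$.
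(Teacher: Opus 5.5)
Your architecture matches the paper's: two boundary regimes and a middle one. You bound $\nu^\vartheta$ from below on a sub-cap of radius $\sim\sqrt s$ where $\mathrm{dist}(\cdot,\partial C)\gtrsim s$, bound $\nu^\vartheta$ from above and integrate to get $N_\vartheta\lesssim s^{\frac{d+1}{2}+\beta_2}$, and then compare exponents. The lower bounds, the middle region, the case $\beta_2\ge 0$ (where $\mathrm{dist}\le s$ on the slice and the slice has measure $\lesssim s^{\frac{d-1}{2}}$) and the final bookkeeping are all fine.

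The gap is the upper bound $\nu^\vartheta(r)\lesssim s^{\frac{d-1}{2}+\beta_2}$ when $\beta_2<0$. You assert it ``similarly'' and then list it as the main obstacle, and the tool you propose for it does not work. For $\beta_2<0$ you need a \emph{lower} bound on $\mathrm{dist}(\cdot,\partial C)$ on the whole slice, degenerating only at its rim and at an integrable rate. Sandwiching $C$ between two balls tangent at $x_\vartheta$ gives $\mathrm{dist}(x,\partial C)\ge\mathrm{dist}(x,\partial B_{\mathrm{in}})$ only for $x$ in the inner ball. The inner ball's slice has radius $\sim\sqrt{s/k_{\max}}$, while the slice of $C$ may extend to radius $\sim\sqrt{s/k_{\min}}$, where $k_{\min}\le k_{\max}$ are the curvature bounds. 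In general a neighbourhood of the rim lies outside the inner ball, and that is exactly where $\mathrm{dist}^{\beta_2}$ blows up. There the sandwich gives no lower bound at all. The outer ball only bounds $\mathrm{dist}$ from above, which is the wrong direction when $\beta_2<0$.

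The missing idea is the concavity of $x\mapsto\mathrm{dist}(x,\partial C)$ on the convex set $C$ (Lemma~\ref{subset of convex is convex}).
Let $y_0=x_\vartheta+s\vartheta$, the point of the slice above the lowest point, so that $\mathrm{dist}(y_0,\partial C)\gtrsim s$. The distance vanishes on the rim, so concavity along each segment from $y_0$ to the rim gives $\mathrm{dist}(x,\partial C)\gtrsim s\,u(x)$. Here $u=1$ at $y_0$, $u=0$ on the rim, and $u$ is affine on these segments.
Then $|\nabla u|\ge 1/R$, where $R\sim\sqrt s$ is the radius of a ball centred at $y_0$ containing the slice. The level sets $\{u=t\}$ are boundaries of homothetic copies of the slice, so their $\mathcal H^{d-2}$-measure is $\lesssim R^{d-2}$.
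The coarea formula then gives $\int_{\mathrm{slice}}u^{\beta_2}\le R\int_{\mathrm{slice}}u^{\beta_2}|\nabla u|\lesssim R^{d-1}\int_0^1t^{\beta_2}\,dt$. This is finite precisely because $\beta_2>-1$.
Hence $\nu^\vartheta(r)\lesssim s^{\beta_2}R^{d-1}\simeq s^{\frac{d-1}{2}+\beta_2}$. With this step supplied, your argument closes exactly as in the paper. Incidentally, after radially rescaling the slice onto a ball of radius $\sqrt s$, this bound is also what makes the lower half of your heuristic $\mathrm{dist}\gtrsim s-|y|^2$ rigorous.
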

\end{example}
\begin{example} (Densities bounded on their convex support) We can take $\beta_1=\beta_2=0$ in Example \ref{example prescribed behavior near boundary}. Therefore, assumption \eqref{assumption nu} is satisfied by measures with support contained in a smooth and uniformly convex domain $C$ for which there exist $r_0>0$ and $0<c_1<c_2$ such that 
\begin{equation*}
    c_1\leq \nu(x)\leq c_2 \hspace{5mm} \text{for any }  x\in C_{r_0}.
\end{equation*}
\end{example}
\begin{proof}[Proof of Lemma \ref{boundary of convex lemma}]In the following, the symbols \say{$\gtrsim$} and \say{$\lesssim$} will denote inequalities holding up to constants only depending on the dimension, on the set $C$ and on $r_0$. 

Given any $\vartheta \in \mathbb{S}^{d-1}$, denote by $r^\vartheta_{-}=\inf_{t\in \R}\{\nu^\vartheta(t)>0\}$, $r^\vartheta_{+}=\sup_{t\in \R}\{\nu^\vartheta(t)>0\}$.

We distinguish three cases:
\begin{enumerate}
    \item $r\in (r^{\vart}_-+r_0,r^\vart_+-r_0)$,\\
    \item $r\in (r^{\vart}_-,r^\vart_-+r_0)$,\\
    \item $r\in (r^{\vart}_+-r_0,r^\vart_+)$.
\end{enumerate}
In each of these three domains, we will provide estimates from below for $\nu^\vartheta$, and estimates from above for $N^\vart$, in the order, thus finally deducing the desired comparison. 

\noindent\underline{Estimates from below for $\nu^\vartheta$}:
    \begin{enumerate}
        \item Up to reducing the value of $r_0$ if needed, for $r\in (r^{\vartheta}_{-}+r_0,r^{\vartheta}_{+}-r_0)$, the set $S_\vart(r,r_0):=\{x \in C \colon \, x\cdot\vart=r, \frac 12 r_0<\mathrm{dist}(x,\partial C)<r_0\}$ has $(d-1)-$measure which is at least $cr_0$. This provides,
        by \eqref{inequality nu}, \[
                \nu^\vartheta (r)=  \int_{x \in C \colon \, x\cdot\vart=r} \nu(x)\geq \int_{S_\vart(r,r_0)} \nu(x)\gtrsim r_0^{\beta_1+1}.
               \]

        \item For $r\in (r^\vartheta_{-},r^\vartheta_{-}+r_0)$ one has
            \begin{equation*}
                \nu^\vartheta(r)=\int_{x\in C \colon \, x\cdot \vart=r} \nu(x)= \int_{x\in C_{r_0} \colon \, x\cdot \vart=r} \nu(x)\geq c_1\int_{x\in C_{r_0}\colon \,x\cdot \vart =r} \text{dist}(\partial C,x)^{\beta_1}.
        \end{equation*}
        By the smoothness of $C$, for any $r\in (r^{\vart}_-,r^\vart_-+r_0)$, there exists a $(d-1)$ dimensional ball $B^{(d-1)}_R$ with radius $|r-r^\vart_-|^{1/2}\lesssim R\lesssim |r-r^\vart_-|^{1/2}$ such that $B^{(d-1)}_R\subset C_{r_0}\cap \{x\cdot \vart=r\} $. On every point of this ball we have $\text{dist}(\partial C,x)\leq |r-r^\vart_-|$, but if we restrict to a ball of radius $R/2$ we also have  $\text{dist}(\partial C,x)\gtrsim |r-r^\vart_-|$. We then obtain
        \begin{equation}
        \label{estimate from below (rvart-,r0+rvart-), beta1>0}
            \nu^\vart(r)\gtrsim c_1\int_{B^{(d-1)}_{R/2}}|r-r^\vart_-|^\beta_1 \gtrsim c|r-r^\vart_{-}|^{\beta_1+\frac{d-1}{2}}.
        \end{equation}
        \item On $(r^\vartheta_{+}-r_0,r^\vartheta_{+})$, we can repeat the same argument and obtain
            \begin{equation}
                \label{estimate from below (rvart+-r0,rvart+)}\nu^\vartheta(r)\gtrsim c_1|r-r^\vartheta_{+}|^{\beta_1+\frac{d-1}{2}}.
            \end{equation}
    \end{enumerate}
\noindent\underline{Estimates from above for $N_\vartheta$, $1-N_\vart$}:
    \begin{enumerate}
        \item For $r\in(r^\vartheta_{-}+r_0,r^\vartheta_{+}-r_0)$ it trivially holds
            \begin{equation*}
            N_\vartheta(r)\leq 1, \hspace{5mm} 1-N_\vartheta(r)\leq 1.
            \end{equation*}
        \item We start from an estimate from above on $\nu^\vart$. For $r\in(r_{-}^{\vartheta},r_{-}^{\vartheta}+r_0)$, we have that
        \begin{equation*}
            \nu^\vart (r) \leq c_2 \int_{x\in C\colon x\cdot \vart=r}\text{dist}(x,\partial C)^{\beta_2}\,dx,
        \end{equation*}
        where we used that for such an $r$ we have $C_{r_0}\cap \{x\cdot \vart=r\}=C\cap \{x\cdot \vart=r\}$.
        If $\beta_2\geq 0$, then one has that $\text{dist}(x,\partial C)\leq |r-r^\vart_-|$ for any $x\in C_{r_0}\cap \{x\cdot \vart=r\}$. Moreover, by the uniform convexity of $C$, for any $r\in (r^{\vart}_-,r^\vart_-+r_0)$, there exists a $(d-1)$ dimensional ball $B^{(d-1)}_R$ with radius $|r-r^\vart_-|^{1/2}\lesssim R\lesssim |r-r^\vart_-|^{1/2}$ such that $C\cap \{x\cdot \vart=r\}\subset B^{(d-1)}_R$. Thus
        \begin{equation}
        \label{estimate from above (rvart-,r0+rvart-), beta2>0}
            \nu^\vart (r) \leq c_2\int_{x\in C\colon x\cdot \vart=r} |r-r^\vart_-|^{\beta_2}\,dx\lesssim c_2 \int_{B_R^{(d-1)}}|r-r^\vart_-|^{\beta_2}\,dx\lesssim c_2 |r-r^\vart_-|^{\beta_2+\frac{d-1}{2}}.
        \end{equation}
        If instead, $\beta_2< 0$, we need a more refined argument, since upper bounds on the distance to $\partial C$ are not anymore what we need, and we cannot restrict to a smaller ball on which we can easily estimate from below such a distance, as we did when we were bounding $\nu^\vart$ from below. We note that the function $x\mapsto \text{dist}(x,\partial C)$ is concave (see Lemma \ref{subset of convex is convex} below). This function takes value $0$ on the relative boundary of $C\cap\{x\,:\,x\cdot\vart=r\}$. Moreover, there is a point $y_0\in C\cap\{x\,:\,x\cdot\vart=r\}$ such that $\text{dist}(y_0,\partial C)\gtrsim |r-r^\vart_-|$, that can be taken to be the projection onto the hyperplane $\{x\,:\,x\cdot\vart=r\}$ of the point $x_0\in \partial C$ realizing the infimum in $\inf_{x\in \partial C} x\cdot \vart$. This means, by concavity, that for every $x\in C\cap\{x\,:\,x\cdot\vart=r\}$ w have $\text{dist}(x,\partial C)\gtrsim |r-r^\vart_-|u(x)$, where the value $u(x)$ is defined as follows: let $y(x)$ be a point in the relative boundary of $C\cap\{x\,:\,x\cdot\vart=r\}$ such that the segment $[y_0,y(x)]$ passes through $x$, and define $u(x):=|x-y(x)|/|y_0-y(x)|$. 

        We then obtain 
$$          \nu^\vart (r) \lesssim \int_{x\in C\colon x\cdot \vart=r} |r-r^\vart_-|^{\beta_2}u(x)^{\beta_2}.$$
We then notice that we have $|\nabla u|\geq 1/R$, where $R$ is the radius of the ball $B^{d-1}_R(y_0)$ containing $C\cap \{x\,:\, x\cdot \vart=r\}$ and then
$$\int_{x\in C\colon x\cdot \vart=r} u(x)^{\beta_2}\leq R\int_{x\in C\colon x\cdot \vart=r} u(x)^{\beta_2}|\nabla u|=R\int_0^1 t^{\beta_2}\mathrm{Per}(\{u>t\})dt\lesssim R\cdot R^{d-2}\int_0^1 t^{\beta_2}dt.$$
Here we used first the coarea formula and then the fact that the sets $\{u>t\}$ are convex sets homothetical to $C\cap\{x\,:\,x\cdot\vart=r\}$, so that their perimeter is smaller than the one of the ball containing the latter set. Finally, we use $\beta_2>-1$ for the finiteness of the integral, and we then obtain
$$\nu^\vart (r) \leq  |r-r^\vart_-|^{\beta_2}R^{d-1}\simeq |r-r^\vart_-|^{\beta_2+\frac{d-1}{2}}.$$

        Using the previous estimates, we deduce
            \begin{equation*}
            \begin{split}
            N_\vartheta(r)&=\int_{r_{-}^\vartheta}^r \nu_\vartheta(s)\, ds\lesssim \int_{0}^{r-r^\vartheta_{-}} c_2\,s^{\frac{d-1}{2}+\beta_2}\,ds\leq \frac{c_2}{\frac{d+1}{2}+\beta_2}|r-r^{\vartheta}_{-}|^{\frac{d+1}{2}+\beta_2}.
            \end{split}
            \end{equation*}
        
        \item For $r \in (r^\vartheta_{+}-r_0,r^\vartheta_{+})$, one similarly has
            \begin{equation*}
                1-N_\vartheta(r)\lesssim \frac{c_2}{\frac{d+1}{2}+\beta_2}|r^\vartheta_{+}-r|^{\frac{d+1}{2}+\beta_2}.
            \end{equation*}
    \end{enumerate}
\noindent\underline{Comparing $\nu^\vartheta$ and $N_\vartheta$}:
    \begin{enumerate}
        \item On $(r^\vartheta_{-}+r_0,r^\vartheta_{+}-r_0)$, it holds  
            \begin{equation*}
            \nu^\vartheta(r)\gtrsim \frac{c_1}{\beta_1+1}\,r_0^{\beta_1+1}\gtrsim \frac{c_1}{\beta_1+1}\,r_0^{\beta_1+1}\min\{N_\vartheta(r),1-N_\vartheta(r)\}^\alpha,
            \end{equation*}
        for any $\alpha \in (0,(d-1)/d)$;
        \item On $(r^\vartheta_-,r^\vartheta_-+r_0)$, denoting by $\kappa$ a constant only depending on $\nu$, it holds
            \begin{equation*}
            \nu^\vartheta(r)\gtrsim \kappa N_\vartheta(r)^{\frac{\frac{d-1}{2}+\beta_1}{\frac{d+1}{2}+\beta_2}}= \kappa \min\{N_\vart (r),1-N_\vart(r)\}^{\frac{\frac{d-1}{2}+\beta_1}{\frac{d+1}{2}+\beta_2}}.
            \end{equation*}
        \item On $(r^\vartheta_{+}-r_0,r^\vartheta_+)$, one similarly has
            \begin{equation*}
            \nu^\vartheta(r)\gtrsim \kappa \min\{N_\vart (r),1-N_\vart(r)\}^{\frac{\frac{d-1}{2}+\beta_1}{\frac{d+1}{2}+\beta_2}}.
            \end{equation*}
    \end{enumerate}
Imposing condition \eqref{condition on exponents}, we obtain the claim.
\end{proof}
\begin{remark}
    From the proof of Lemma \ref{boundary of convex lemma} we deduce that, in the case when $\beta_1=\beta_2=\beta$, assumption \eqref{assumption nu} is satisfied as soon as $\max\left\{-1,-\frac{d-1}{2}\right\}<\beta<\frac{d-1}{2}$.
\end{remark}
 \begin{lemma}
        \label{subset of convex is convex}
            Let $K$ be convex, and let $R(x)=\text{dist}(x,\partial K)$. Then $R$ is concave on $K$.
        \end{lemma}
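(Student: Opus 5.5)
Throughout, $K$ is a convex set and $R(x)=\mathrm{dist}(x,\partial K)$ is considered for $x\in K$. The plan is to write $R$ as an infimum of affine functions, since an infimum of affine functions is concave. Concretely, the claim I will prove is that for every $x$ in $K$
\begin{equation*}
R(x)=\inf_{H}\mathrm{dist}(x,H),
\end{equation*}
where the infimum runs over all supporting hyperplanes $H$ of $K$. For a supporting hyperplane $H=\{y\colon y\cdot e=h_K(e)\}$ with outer unit normal $e$ and $h_K(e)=\sup_{y\in K}y\cdot e$, and for $x\in K$, we have $\mathrm{dist}(x,H)=h_K(e)-x\cdot e$. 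This quantity is affine in $x$, so the formula gives $R$ as an infimum of affine functions on $K$, and $R$ is concave on $K$.

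To prove the formula, I will show two inequalities.

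First, $R(x)\le \mathrm{dist}(x,H)$ for every supporting $H$. Since $x\in K$ and $K$ lies on one side of $H$, the segment from $x$ to its orthogonal projection onto $H$ meets $\partial K$. Hence some boundary point lies within distance $\mathrm{dist}(x,H)$ of $x$. If $x\in\partial K$, both sides vanish: then $R(x)=0$, and a supporting hyperplane through $x$ exists.

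Second, $R(x)\ge\inf_H\mathrm{dist}(x,H)$. Take $z\in\partial K$ with $|x-z|=R(x)$; such a point exists since $\partial K$ is closed. By the Hahn–Banach/supporting hyperplane theorem there is a supporting hyperplane $H_z$ through $z$. Then $\mathrm{dist}(x,H_z)\le|x-z|=R(x)$.

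The only delicate step is this supporting-hyperplane argument. It needs care when $K$ has empty interior or is unbounded. If the interior is empty, then $K\subset\partial K$ and $R\equiv0$ on $K$, which is trivially concave. In the unbounded case, $h_K(e)$ may be $+\infty$ for some directions $e$. This is harmless, because only the finite affine functions enter the infimum, and the hyperplane $H_z$ constructed in the second step is always a genuine one.
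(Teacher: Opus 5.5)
Your proof is correct, but it is essentially the dual of the paper's argument rather than the same one.

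The paper works directly with inscribed balls. For $x,y\in K$ one has $B(x,R(x)),B(y,R(y))\subset K$. Every point of $B\bigl(tx+(1-t)y,\,tR(x)+(1-t)R(y)\bigr)$ can be written as $t(x+w_1)+(1-t)(y+w_2)$ with $|w_1|<R(x)$ and $|w_2|<R(y)$. Convexity of $K$ then puts this ball inside $K$, which gives $R(tx+(1-t)y)\ge tR(x)+(1-t)R(y)$. That route needs only the elementary equivalence ``$B(x,r)\subset K$ iff $R(x)\ge r$'' for $x\in K$, which is a connectedness remark. It does not care whether $K$ is bounded or has interior, so it uses no separation theorem and no case analysis.

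You instead write $R$ on $K$ as $\inf_e\bigl(h_K(e)-x\cdot e\bigr)$ over outer normals of supporting hyperplanes, so concavity follows because $R$ is an infimum of affine functions. The cost is the supporting hyperplane theorem at $z\in\partial K$. This applies because, once $\mathrm{int}\,K\neq\emptyset$, $z$ is a boundary point of the closed convex set $\overline K$. You also have to do the degenerate-case bookkeeping you carried out. The gain is an explicit representation of $R$ as an infimum of $1$-Lipschitz affine functions, which says more than concavity alone.

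The one step you state without justification is that the segment from $x$ to its projection $p$ onto $H$ meets $\partial K$. It is correct: points of $\mathrm{int}\,K$ satisfy $y\cdot e<h_K(e)$ strictly, so $p\notin\mathrm{int}\,K$. A segment joining a point of $\mathrm{int}\,K$ to a point outside it must cross $\partial K$. It is worth saying this explicitly.
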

        \begin{proof}
            Given $x,y \in K$, we have $B(x,R(x)),B(y,R(y))\subset K$. Then, by convexity of $K$, we have $B(tx+(1-t)y,tR(x)+(1-t)R(y)) \subset K$, which shows $R(tx+(1-t)y)\geq tR(x)+(1-t)R(y) $, hence the concavity of $R$. 
        \end{proof}
       
\section{Proof of Theorem \ref{Main theorem}}
\label{sec: main theorem}
We will prove the following Theorem, which is a sufficient condition to obtain Theorem \ref{Main theorem}:
\begin{proposition}
\label{proposition assumption A1}
    Let $K=\{\rho \in \mathscr{P}_2(\R^d)\colon \|\rho\|_{L^\infty}\leq M\}$, for some $M>0$.
    If $\nu$ satisfies \eqref{assumption nu}, then there exists a constant $c(d)>0$ depending only on $d$, such that
    \begin{equation}
    \label{LipConstant}
    \text{Lip}\left(v[\rho]+\frac{\mathrm{id}}{d}\right)\leq \frac{c(d)}{\left(1-\frac{\alpha d}{d-1}\right)\kappa}M^{\frac{1}{d}}=:L
    \end{equation}
    for any $\rho \in K$. In particular, $\mathscr{F}_\nu$ is $\lambda$-geodesically convex on $K$ for $\lambda=\frac{1}{d}-L$.
\end{proposition}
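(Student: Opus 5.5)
The plan is to reduce Proposition \ref{proposition assumption A1} to Lemma \ref{Lemma 1}. The set $K$ is geodesically convex, so it suffices to prove the pointwise bound \eqref{LipConstant} on $D v[\rho]+\frac{\mathrm{id}}{d}$ for every $\rho\in K$. As in the proof of Lemma \ref{lemma 2}, differentiating $N_\vart(T_\vart(r))=R_\vart(r)$ gives
\begin{equation*}
\left|Dv[\rho](x)+\frac{\mathrm{id}}{d}\right|\le \fint_{\mathbb S^{d-1}}\frac{\rho^\vart(x\cdot\vart)}{\nu^\vart(T_\vart(x\cdot\vart))}\,d\sigma(\vart).
\end{equation*}
The new ingredient is that we no longer bound $\nu^\vart$ from below by a constant. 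Instead we apply \eqref{assumption nu} at the point $T_\vart(x\cdot\vart)$ and use $N_\vart(T_\vart(r))=R_\vart(r)$. Writing $h_x(\vart):=\min\{R_\vart(x\cdot\vart),1-R_\vart(x\cdot\vart)\}$, which is the mass under $\rho$ of the smaller of the two half-spaces bounded by the hyperplane through $x$ orthogonal to $\vart$, we get
\begin{equation*}
\left|Dv[\rho](x)+\frac{\mathrm{id}}{d}\right|\le \frac1\kappa\fint_{\mathbb S^{d-1}}\rho^\vart(x\cdot\vart)\,h_x(\vart)^{-\alpha}\,d\sigma(\vart).
\end{equation*}
What remains is a purely geometric estimate on measures with $\|\rho\|_{L^\infty}\le M$. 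It no longer involves $\nu$.

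The core claim I would aim for is
\begin{equation*}
\fint_{\mathbb S^{d-1}}\rho^\vart(x\cdot\vart)\,h_x(\vart)^{-\alpha}\,d\sigma(\vart)\le c(d)\,M^{1/d}\int_0^1 t^{-\frac{\alpha d}{d-1}}\,dt=\frac{c(d)M^{1/d}}{1-\frac{\alpha d}{d-1}},
\end{equation*}
uniformly in $x$. This produces exactly the constant in \eqref{LipConstant}. The heuristic behind it comes from the $\alpha=0$ case. There the left side equals $c\int\rho(y)|x-y|^{-1}dy$, and the bathtub principle, as in \eqref{rho/x stima Lp} with $p=\infty$, gives the bound $c(d)M^{1/d}$: the worst case is the uniform density $M$ on a ball centred at $x$, of radius $\sim M^{-1/d}$.

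For $\alpha>0$ I would exploit the fact that small half-space mass forces small slice mass, in the following quantitative sense. By Fubini, $h_x(\vart)=\int_0^\infty\rho^\vart(x\cdot\vart\pm s)\,ds$. I would combine this with the $L^\infty$ bound, so that a slab of width $s$ at distance at most $s$ from $x$ carries mass at most $M$ times its volume. I would also use the polar representation of Lemma 5.1 of \cite{cozzi2025long}, which rewrites the $\vart$-average as $c\int\rho(y)\,|x-y|^{-1}\,G_x(y)\,dy$, where $G_x(y)$ is the average of $h_x^{-\alpha}$ over the great sphere $\{\vart\perp y-x\}$. I would then run a bathtub/rearrangement argument on the joint distribution of slice mass and half-space mass. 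The expected outcome is that, at the level where $h_x\approx t$, the relevant slice masses are controlled by $M^{1/d}t^{(d-1)/d}$ in an averaged sense. This is the isoperimetric-type scaling for a cap of volume $t/M$, whose cross-section is of order $M^{-(d-1)/d}t^{(d-1)/d}$. A layer-cake decomposition in $t=h_x$ then yields $\int_0^1 t^{-\alpha d/(d-1)}dt$, which is finite exactly because $\alpha<(d-1)/d$.

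The main obstacle is this averaged comparison between slice mass and half-space mass. Pointwise in $\vart$ it is false, since a single hyperplane slice can be heavy while a neighbouring half-space is light. So the inequality must be proved only after integrating over $\vart$, most likely through the representation via $G_x(y)$ together with an extremality argument in which the worst configuration is a cap of density $M$ adjacent to the hyperplane. The first thing I would try is to prove it for $\rho=M\chi_E$ with $E$ of measure $1/M$, reducing to sets by the bathtub principle. I would then compare with the case where $E$ is a ball passing near $x$, and check that the constant depends only on $d$.
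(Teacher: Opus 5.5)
\textbf{There is a genuine gap: the core geometric inequality is stated but not proved.}

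Your reduction is correct and coincides with the paper's. It consists of Lemma \ref{Lemma 1} with the geodesic convexity of $K$, the pointwise bound of $|Dv[\rho](x)+\mathrm{id}/d|$ by $\fint\rho^\vart(x\cdot\vart)/\nu^\vart(T_\vart(x\cdot\vart))\,d\sigma(\vart)$, and \eqref{assumption nu} applied at $T_\vart(x\cdot\vart)$ together with $N_\vart\circ T_\vart=R_\vart$. The core inequality you then formulate is the right target, and it is true. Equivalently, it is an averaged growth bound $\fint\rho^\vart(x\cdot\vart)\,\chi_{\{h_x(\vart)\le t\}}\,d\sigma(\vart)\lesssim M^{1/d}t^{(d-1)/d}$ followed by a layer-cake integration. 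But this inequality is the whole content of the proposition, and you only name it as the main obstacle and list tools you would try.

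The tool you single out is also not available as stated. Reducing to $\rho=M\chi_E$ ``by the bathtub principle'' presupposes that the supremum over $\{0\le\rho\le M,\ \int\rho=1\}$ is attained at extreme points. The functional $\rho\mapsto\fint\rho^\vart(x\cdot\vart)\,h_x(\vart)^{-\alpha}\,d\sigma(\vart)$ is not linear in $\rho$, since $\rho$ also enters the denominator, and it has no evident convexity. Likewise, no rearrangement or isoperimetric inequality is at hand that would make caps or balls extremal for it.

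The paper closes this gap with two ideas your proposal lacks.

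\emph{Ray-wise bathtub.} The bathtub principle is applied ray by ray from $x$, where the problem is linear. Set $m_x(e)=\int_0^\infty\rho(x+er)r^{d-2}\,dr$, so that $\rho^\vart(x\cdot\vart)=\int_{e\cdot\vart=0}m_x\,d\mathcal H^{d-2}$. Minimize $\int_0^\infty\mu\,r^{d-1}dr$ under $0\le\mu\le M$ and $\int_0^\infty\mu\,r^{d-2}dr=m_x(e)$, then apply H\"older on the hemisphere. This gives $R_\vart(x\cdot\vart)\gtrsim M^{-1/(d-1)}\bigl(\int_{e\cdot\vart\le0}m_x\,d\sigma\bigr)^{d/(d-1)}$. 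Numerator and denominator are now the equator integral and the power $\beta=\alpha d/(d-1)<1$ of the hemisphere integral of the same function $m_x$.

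\emph{Equator integral as a rotation derivative.} Take a great circle $\theta(t)=e_1\cos t+e_2\sin t$. Let $\varphi(t)$ be the mass of $m_x$ on the quarter-sphere $\{e\cdot\theta(t)\le0,\ e\cdot e_2\ge0\}$. Then:
\begin{itemize}
\item $\varphi$ is piecewise monotone and dominated by the hemisphere mass;
\item $|\varphi'(t)|$ is the integral of $m_x$ over the half-equator $\{e\cdot\theta(t)=0,\ e\cdot e_2\ge0\}$ against the weight $\sqrt{(e\cdot e_1)^2+(e\cdot e_2)^2}$;
\item this weight becomes a dimensional constant after averaging over frames $(e_1,e_2)$ by the coarea formula.
\end{itemize}
Splitting into quarter-spheres is what prevents the two sides of the equator from cancelling in $\varphi'$. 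Hence $\int_0^{2\pi}|\varphi'|\varphi^{-\beta}\,dt\le\frac{2}{1-\beta}(\max\varphi)^{1-\beta}$. Moreover $\max\varphi\le\int_{\mathbb S^{d-1}}m_x\,d\sigma=\int\rho(y)|x-y|^{-1}dy\lesssim M^{1/d}$, and the powers combine as $M^{\alpha/(d-1)}M^{(1-\beta)/d}=M^{1/d}$.

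Running the same argument with indicator functions in place of $s^{-\beta}$ gives exactly your averaged growth bound. So this is the mechanism that would turn your layer-cake heuristic into a proof.
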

\begin{proof}

Thanks to Lemma \ref{Lemma 1} and the fact that the set $K$ is geodesically convex, we just need to prove that \eqref{LipConstant} holds true for any $\rho \in K$ and any $\nu$ satisfying assumption \eqref{assumption nu}. By definition of $v[\rho,\nu]$, fixing a point $x \in \text{spt }\,\rho$, we have (the first inequality is obtained as in the proof of Lemma \ref{lemma 2})
\begin{equation*}
\label{lip velocity 1}
    \begin{split}
        \left|Dv[\rho](x)+\frac{\mathrm{id}}{d}\right|\,&\leq \fint_{\mathbb{S}^{d-1}}  \frac{\rho^\vartheta(x\cdot \vartheta)}{\nu^\vartheta(T_{\vartheta}(x\cdot\vartheta))}\,d\sigma(\vartheta)\\
        \leq& \frac{1}{\kappa}\fint_{\mathbb{S}^{d-1}}  \frac{\rho^\vartheta(x\cdot \vartheta)}{(\min \{N_\vartheta(T_{\vartheta}(x\cdot \vartheta)),1-N_\vartheta(T_{\vartheta}(x\cdot \vartheta))\})^\alpha}\,d\sigma(\vartheta)\\
        =& \frac{1}{\kappa}\fint_{\mathbb{S}^{d-1}}  \frac{\rho^\vartheta(x\cdot \vartheta)}{(\min \{R_\vartheta(x\cdot \vartheta),1-R_\vartheta(x\cdot \vartheta)\})^{\alpha}}\,d\sigma(\vartheta),
    \end{split}
\end{equation*}
where in the last equality, we used again the relation
\begin{equation*}
N_\vartheta(T_\vartheta(r))=R_\vartheta(r) \quad \text{for any }r\in \text{spt }\rho^\vart.
\end{equation*}
Denoting by $\chi^+(\vart)=\chi_{\{R_\vart(x\cdot \vart) \leq 1/2\}}(\vart)$ and $\chi^-(\vart)=\chi_{\{R_\vart(x\cdot \vart) \geq 1/2\}}(\vart)$, we obtain
\begin{equation}
\label{eq lip velocity 1}
    \begin{split}
        \left|Dv[\rho](x)+\frac{\mathrm{id}}{d}\right|\,\leq&\frac{1}{\kappa}\fint_{\mathbb{S}^{d-1}}\frac{\rho^\vart(x\cdot \vart)}{R_\vart(x\cdot\vart)^\alpha}\chi^+(\vart)\,d\sigma(\vart)+\frac{1}{\kappa}\fint_{\mathbb{S}^{d-1}}\frac{\rho^\vart(x\cdot \vart)}{(1-R_\vart(x\cdot\vart))^\alpha}\chi^-(\vart)\,d\sigma(\vart)\\
        \leq& \frac{1}{\kappa}\fint_{\mathbb{S}^{d-1}}\frac{\rho^\vart(x\cdot \vart)}{R_\vart(x\cdot\vart)^\alpha}\,d\sigma(\vart)+\frac{1}{\kappa}\fint_{\mathbb{S}^{d-1}}\frac{\rho^\vart(x\cdot \vart)}{(1-R_\vart(x\cdot\vart))^\alpha}\,d\sigma(\vart)\\
        =&\frac{2}{\kappa}\fint_{\mathbb{S}^{d-1}}\frac{\rho^\vart(x\cdot \vart)}{R_\vart(x\cdot\vart)^\alpha}\,d\sigma(\vart),
    \end{split}
\end{equation}
where the last equality was obtained by symmetry.
For any $e \in \mathbb{S}^{d-1}$, define
\begin{equation}
    \label{d-2 m(e) integral} 
    m_{x}(e)=\int_{0}^{+\infty}\rho(x+e\,r)r^{d-2}\,dr.
    \end{equation}
We have that, for any $\vartheta \in \mathbb{S}^{d-1}$
\begin{equation}
    \label{rho theta in x theta}
    \begin{split}
    \rho^\vartheta(x\cdot \vartheta)&=\int_{e\cdot\vartheta=0}m_x(e)\,d\mathcal H^{d-2}(e),\\ R_\vartheta(x\cdot \vartheta)&=\int_{e\cdot \vart\leq 0}\int_{0}^{+\infty}\rho(x+e\,r)r^{d-1}\,dr\,d\sigma(e).
    \end{split}
\end{equation}
Here and below, integrals over equators $\{e\in\mathbb S^{d-1}:e\cdot\vartheta=0\}$ and their subsets are taken with respect to $\mathcal H^{d-2}$, while integrals over hemispheres use $\sigma$. When $d=2$, $\mathcal H^0$ is counting measure.

Applying \eqref{rho theta in x theta} in \eqref{eq lip velocity 1}, we obtain
\begin{equation}
\label{eq: main thm}
        \left|Dv[\rho](x)+\frac{\mathrm{id}}{d}\right|\leq
        \frac{2}{\kappa}\fint_{\mathbb{S}^{d-1}}  \frac{\int_{e\cdot \vartheta=0}m_x(e)\,d\mathcal H^{d-2}(e)}{\left(\int_{e\cdot \vartheta \leq 0} \int_{0}^{+\infty}\rho(x+e\,r)r^{d-1}\,dr\,d\sigma(e)\right)^\alpha}\,d\sigma(\vartheta).
\end{equation}
Consider now
\begin{equation*}
    q(M,m_x(e)):=\inf_{0\leq\mu\leq M} \bigg\{\int_{0}^{+\infty}\mu(r)r^{d-1}\,dr \,\colon \int_{0}^{+\infty}\mu(r)r^{d-2}\,dr=m_{x}(e) \bigg\}.
\end{equation*}
The above infimum is achieved by $\mu(r)=M\chi_{(0,R)}(r)$. Here, necessarily,
\[R=\left(\frac{(d-1)m_{x}(e)}{M}\right)^{\frac{1}{d-1}},\]
since
\begin{equation*}
    \begin{split}
    m_{x}(e)=M\int_{0}^{+\infty} \chi_{(0,R)}(r)r^{d-2}\,dr=M \int_0^Rr^{d-2}\,dr=\frac{M}{d-1}R^{d-1}.
    \end{split}
\end{equation*}
Therefore,
\begin{equation}
    \label{optimization problem}
    \begin{split}
        q(M,m_x(e))=M^{-\frac{1}{d-1}}\frac{(d-1)^{\frac{d}{d-1}}}{d}m_x(e)^{\frac{d}{d-1}}.
    \end{split}
\end{equation}
Recalling \eqref{d-2 m(e) integral}, and thanks to \eqref{optimization problem}, we are able to estimate the denominator on the right hand side of \eqref{eq: main thm}, obtaining 
\begin{equation}
\label{Lip Est part 1}
    \begin{split}
        \left|Dv[\rho](x)+\frac{\mathrm{id}}{d}\right|\leq&\frac{2\,d^\alpha\,M^{\frac{\alpha}{d-1}}}{(d-1)^{\frac{\alpha d}{d-1}}\kappa}\fint_{\mathbb{S}^{d-1}}  \frac{\int_{e\cdot \vartheta =0}m_{x}(e)\,d\mathcal H^{d-2}(e)}{\left(\int_{e\cdot \vartheta \leq 0} m_x^{\frac{d}{d-1}}(e)\,d\sigma(e)\right)^{\alpha}}\,d\sigma(\vartheta)\\
        \leq&\frac{c(d)\,M^{\frac{\alpha}{d-1}}}{\kappa} \fint_{\mathbb{S}^{d-1}}  \frac{\int_{e\cdot \vartheta =0}m_{x}(e)\,d\mathcal H^{d-2}(e)}{\left(\int_{e\cdot \vartheta \leq 0} m_{x}(e)\,d\sigma(e)\right)^{\frac{d\alpha}{d-1}}}\,d\sigma(\vartheta),
    \end{split}
\end{equation}
where $c(d)>0$ denotes a constant depending only on $d$ (the term $(d-1)^{\frac{\alpha d}{d-1}}$ can indeed be bounded uniformly for $0\leq\alpha<(d-1)/d$).

We first present the proof in dimension $d=2$, for the reader's convenience. Next, we will generalize the same argument to any dimension.

\noindent\underline{Proof in dimension $d=2$}

Since in this case it holds
\begin{equation*}
    \fint_{\mathbb{S}^{d-1}}  \frac{\int_{e\cdot \vartheta =0}m_{x}(e)\,d\mathcal H^{d-2}(e)}{\left(\int_{e\cdot \vartheta \leq 0} m_{x}(e)\,d\sigma(e)\right)^{\frac{d\alpha}{d-1}}}\,d\sigma(\vartheta)=\fint_{\mathbb{S}^1}\frac{m_x(\vart+\pi/2)+m_x(\vart-\pi/2)}{\left(\int_{\vart-\pi/2}^{\vart+\pi/2}m_x(e)\,d\sigma(e)\right)^{2\alpha}}\,d\sigma(\vart),
\end{equation*}
the idea is to estimate this integral in the variable $\vart$ writing the numerator as a derivative of the denominator. Fix an orthonormal basis $(e_1,e_2)$ of $\R^2$. We parametrize $\mathbb{S}^1$ defining $\theta(e_1,e_2,t)=e_1\cos t+e_2\sin t$. Define $A(e_1,e_2,t)=\{e\in \mathbb{S}^{d-1}\colon e\cdot \theta (e_1,e_2,t)\leq 0\}$, and $A^+(e_1,e_2,t)=A(e_1,e_2,t)\,\cap \{e \in \mathbb{S}^{d-1}\colon e\cdot e_2\geq 0\}$. Define also
\begin{equation*}
    \varphi_{e_1,e_2}(t)= \int_{A^+(e_1,e_2,t)}m_x(e)\,d\sigma(e).
\end{equation*}
The function $\varphi$ is positive, decreasing for $t\in [0,\pi/2)\cup[3\pi/2,2\pi)$ and increasing in $[\pi/2,3\pi/2)$. Moreover it holds
\begin{equation}
\begin{split}
\label{AuxLemma in 2d}
    \theta(e_1,e_2,t)&=\theta(e_1,-e_2,-t),\\
    \varphi_{e_1,e_2}(t)&\leq \int_{A(e_1,e_2,t)}m_x(e)\,d\sigma(e),\\
    |\varphi'_{e_1,e_2}(t)|+|\varphi'_{e_1,-e_2}(-t)|&=\int_{e\cdot \theta(e_1,e_2,t)=0}m_x(e)\,d\mathcal H^{d-2}(e).
\end{split}
\end{equation}
Incidentally, let us note that two of the above identities have been obtained by exploiting the symmetries in our construction when considering an orthonormal basis with a different orientation with respect to the fixed one. This trick, even if more involved, will be the same at the core of the identities in Lemma \ref{AuxLemma}, needed for the construction in dimension $d\geq 3$.

We return to the proof in dimension $d=2$. By our definitions and \eqref{AuxLemma in 2d}, we have
\begin{equation}
\label{stima 2 2d}
\begin{split}
    &\int_{\mathbb{S}^{1}}  \frac{\int_{e\cdot \vartheta =0}m_{x}(e)\,d\mathcal H^{d-2}(e)}{\left(\int_{e\cdot \vartheta \leq 0} m_{x}(e)\,d\sigma(e)\right)^{2\alpha}}\,d\sigma(\vartheta)=\int_{0}^{2\pi}\frac{\int_{e\cdot \theta(e_1,e_2,t)=0}m_x(e)\,d\mathcal H^{d-2}(e)}{\left(\int_{A(e_1,e_2,t)}m_x(e)\,d\sigma(e)\right)^{2\alpha}}\,dt\\
    &=\int_{0}^{2\pi}\frac{|\varphi_{e_1,e_2}'(t)|}{\left(\int_{A(e_1,e_2,t)}m_x(e)\,d\sigma(e)\right)^{2\alpha}}\,dt+\int_{0}^{2\pi}\frac{|\varphi_{e_1,-e_2}'(-t)|}{\left(\int_{A(e_1,-e_2,-t)}m_x(e)\,d\sigma(e)\right)^{2\alpha}}\,dt\\
    &\leq\int_{0}^{2\pi}\frac{|\varphi_{e_1,e_2}'(t)|}{\left(\varphi_{e_1,e_2}(t)\right)^{2\alpha}}\,dt+\int_{0}^{2\pi}\frac{|\varphi_{e_1,-e_2}'(-t)|}{\left(\varphi_{e_1,-e_2}(-t)\right)^{2\alpha}}\,dt\\
    &\leq \frac{1}{1-2\alpha}TV\left[\varphi_{e_1,e_2}^{1-2\alpha}\right]+\frac{1}{1-2\alpha}TV\left[\varphi_{e_1,-e_2}^{1-2\alpha}\right],
    \end{split}
    \end{equation}
    Where $TV$ stands for the total variation. Using the piecewise monotone behavior of $t\mapsto \varphi_{e_1,e_2}(t)$, we have $TV\left[\varphi_{e_1,e_2}^{1-2\alpha}\right]\leq 2\left(\max_{t\in[0,2\pi)}\varphi_{e_1,e_2}(t)\right)^{1-2\alpha}$. Then, by \cite[Lemma 5.1]{cozzi2025long} it holds
\begin{equation}
\label{stima 3 2d}
    \max_{t\in[0,2\pi)}\varphi_{e_1,e_2}(t)\leq \int_{\mathbb{S}^{1}}\int_0^{+\infty}\rho(x+er)dr\,d\sigma(e)=\int_{\R^2}\frac{\rho(y)}{|x-y|}\,dy.
\end{equation}
Applying estimate \eqref{rho/x stima Lp,2} with $p=\infty$ to the right hand side of \eqref{stima 3 2d}, we deduce the existence of a constant $c$ such that 
\begin{equation}
\label{last 2d estimate}
    \max_{t\in[0,2\pi)}\varphi_{e_1,e_2}(t)\leq  c\,M^{\frac{1}{2}},
\end{equation}
 and the estimate does not depend on the chosen basis $(e_1,e_2)$. Combining \eqref{Lip Est part 1}, \eqref{stima 2 2d} and \eqref{last 2d estimate}, we conclude the proof of Proposition \ref{proposition assumption A1} when $d=2$.

\noindent\underline{Proof in dimension $d\geq 3$} 

Define $E$ to be the $(2d-3)-$dimensional set in $\R^{2d}$ given by couples $(e_1,e_2)\in\mathbb{S}^{d-1}\times \mathbb{S}^{d-1}$ such that $e_1\cdot e_2=0$, and endow it with the restriction of $\mathcal H^{2d-3}$. For any given triplet $(e_1,e_2,t)\in E\times [0,2\pi)$, define $\theta(e_1,e_2,t)=e_1\cos t+e_2\sin t$. For fixed $t$, the fibres of $(e_1,e_2)\mapsto\theta(e_1,e_2,t)$ carry $\mathcal H^{d-2}$; the fibres of the full map $\theta:E\times[0,2\pi)\to\mathbb S^{d-1}$ carry $\mathcal H^{d-1}$. All these measures are induced by the Euclidean product metric. As before, define \begin{equation*}
\begin{split}
A(e_1,e_2,t)&=\{e\in \mathbb{S}^{d-1}\colon e\cdot \theta (e_1,e_2,t)\leq 0\},\\
A^+(e_1,e_2,t)&=A(e_1,e_2,t)\,\cap \{e \in \mathbb{S}^{d-1}\colon e\cdot e_2\geq 0\},\\
\partial A(e_1,e_2,t)&=\{e\in \mathbb{S}^{d-1}\colon e\cdot \theta (e_1,e_2,t)= 0\},\\
\partial^+A(e_1,e_2,t)&=\partial A(e_1,e_2,t)\,\cap \{e \in \mathbb{S}^{d-1}\colon e\cdot e_2\geq 0\}.
\end{split}
\end{equation*}
Since no ambiguity can occur, we will simplify the notation, removing the dependence on $e_i,t$. We introduce again the function
\begin{equation*}
    \label{funzione_phi}
    \varphi_{e_1,e_2}(t)\coloneqq \int_{A^+} m_x(e)\, d\sigma(e).
\end{equation*}
Notice that $\varphi_{e_1,e_2}(t)$ is a positive function, decreasing for $t\in [0,\pi/2)\cup [3\pi/2,2\pi)$ and increasing in $[\pi/2,3\pi/2)$.
For the rest of the proof, we will make use of the following 
\begin{lemma} 
\label{AuxLemma}
Let $m_x$, $\varphi, \theta$ be defined as above, and $(e_1,e_2,t)\in E\times{[0,2\pi]}$. Then,
\begin{enumerate}
    \item There exists a function $h\colon \R^{2}\to \R$ such that, for any $t\in[0,2\pi)$, and for any $e_1,e_2\in E$, it holds
\begin{equation*}
\left|\int_{\partial^+A}m(e)
h(e\cdot e_1,e\cdot e_2)
\, d\mathcal H^{d-2}(e)\right|=
|\varphi'_{e_1,e_2}(t)|
\end{equation*}
    \item Fix $t\in [0,2\pi)$. The value of
\begin{equation*}
    \int_{\theta(e_1,e_2,t)=\vart,\, e\cdot e_2 \geq 0} h(e\cdot e_1,e\cdot e_2)\,d\mathcal H^{d-2}(e_1,e_2)
\end{equation*}
is constant with respect to $(e,\vartheta)\in E$.
    \item There exists a dimensional constant $c(d)$ such that, for any two given measurable functions $f,g$ it holds
\begin{equation*}
    \fint_{\mathbb{S}^{d-1}} \frac{\widetilde{f}(\vartheta)}{g(\vartheta)}\,d\sigma(\vartheta)=c(d)\int _{E\times[0,2\pi)}\frac{f(e_1,e_2,t)}{g(\theta(e_1,e_2,t))}\,d\mathcal H^{2d-3}(e_1,e_2)\,dt,
\end{equation*}
    where
\begin{equation*}
    \widetilde{f}(\vartheta)=\int_{\{\theta(e_1,e_2,t)=\vartheta\}}f(e_1,e_2,t)\,d\mathcal H^{d-1}(e_1,e_2,t).
\end{equation*}
\end{enumerate}
\end{lemma}
We postpone the proof of Lemma \ref{AuxLemma}. Combining the results therein contained, we can now conclude the proof of Proposition \ref{proposition assumption A1}. 
Fix $t\in [0,2\pi)$. Using the fact that, for any $(e_1,e_2,t)\in E\times [0,2\pi]$ it holds
\begin{equation*}
    \varphi_{e_1,e_2}(t)\leq \int_{A}m_x(e) \,d\sigma(e),
\end{equation*}
together with point $(1)$ of Lemma \ref{AuxLemma}, one obtains
\begin{equation*}
    \begin{split}
    \label{chain 1}    
    \int_{E}\frac{|\,\varphi'_{e_1,e_2}(t)|}{(\varphi_{e_1,e_2}(t))^{\frac{d\alpha}{d-1}}}\,d\mathcal H^{2d-3}(e_1,e_2)\geq & 
    \left|\int_{E}\frac{\int_{\partial^+A}m(e)h(e\cdot e_1,e\cdot e_2)\,d\mathcal H^{d-2}(e)}{\left(\int_{A}m(e)\,d\sigma(e)\right)^{\frac{d\alpha }{d-1}}}\,d\mathcal H^{2d-3}(e_1,e_2)\right|.
\end{split}
\end{equation*}
Integrating both sides with respect to $t \in [0,2\pi)$, we get
\begin{equation*}
    \int_0^{2\pi}\int_{E}\frac{|\,\varphi'_{e_1,e_2}(t)|}{(\varphi_{e_1,e_2}(t))^{\frac{d\alpha}{d-1}}}\,d\mathcal H^{2d-3}(e_1,e_2)\,dt\geq\left| \int_{E\times[0,2\pi)}\frac{\int_{\partial^+A}m(e)h(e\cdot e_1,e\cdot e_2)\,d\mathcal H^{d-2}(e)}{\left(\int_{A}m(e)\,d\sigma(e)\right)^{\frac{d\alpha }{d-1}}}\,d\mathcal H^{2d-3}(e_1,e_2)\,dt\right|.
\end{equation*}
Applying point $(3)$ of Lemma \ref{AuxLemma} to the right hand side of the above expression, we deduce
\begin{equation*}
\begin{split}
\int_0^{2\pi}\int_{E}&\frac{|\,\varphi'_{e_1,e_2}(t)|}{(\varphi_{e_1,e_2}(t))^{\frac{d\alpha}{d-1}}}\,d\mathcal H^{2d-3}(e_1,e_2)\,dt\\
\ge c(d)&\left|\fint_{\mathbb{S}^{d-1}}\int_{\{(e_1,e_2,t)\colon \theta(e_1,e_2,t)=\vartheta\}}\frac{\int_{e\cdot\vartheta=0, e\cdot e_2 \geq 0} m(e)h(e\cdot e_1, e\cdot e_2)\,d\mathcal H^{d-2}(e)}{\left(\int_{e\cdot \vartheta\leq0}m(e)\,d\sigma(e)\right)^{\frac{d\alpha}{d-1}}}\,d\mathcal H^{d-1}(e_1,e_2,t)\,d\sigma(\vart)\right|\\
    =c(d)&\left|\fint_{\mathbb{S}^{d-1}}\frac{\int_{e\cdot \vartheta=0}m_x(e)\int_{\{(e_1,e_2,t)\colon \theta(e_1,e_2,t)=\vartheta, e\cdot e_2 \geq 0\}}h(e\cdot e_1,e\cdot e_2)\, d\mathcal H^{d-1}(e_1,e_2,t)\, d\mathcal H^{d-2}(e)}{\left(\int_{e\cdot \vartheta\leq0}m(e)\,d\sigma(e)\right)^{\frac{d\alpha}{d-1}}}\,d\sigma(\vart)\right|.
    \end{split}
\end{equation*}
Finally, by point $(2)$ of Lemma \ref{AuxLemma}, we infer that
\begin{equation*}
    \begin{split}
        \fint_{\mathbb{S}^{d-1}}\frac{\int_{e\cdot \vartheta=0}m_x(e)\, d\mathcal H^{d-2}(e)}{\left(\int_{e\cdot \vartheta\leq0}m_x(e)\,d\sigma(e)\right)^{\frac{d\alpha}{d-1}}}\,d\sigma(\vart)&\leq c(d)\int_{E}\int_{0}^{2\pi}\frac{|\varphi'_{e_1,e_2}(t)|}{(\varphi_{e_1,e_2}(t))^{\frac{d\alpha}{d-1}}}\,dt\,d\mathcal H^{2d-3}(e_1,e_2)\\
        &\leq \frac{c(d)}{1-\frac{d\alpha}{d-1}}2\int_{E}\max_{t\in(0,2\pi)}\,\left|\varphi_{e_1,e_2}(t)\right|^{-\frac{d\alpha}{d-1}+1}\, d\mathcal H^{2d-3}(e_1,e_2).
    \end{split}
\end{equation*}
On the other hand, using again \cite[Lemma 5.1]{cozzi2025long} together with the estimate proved in \eqref{rho/x stima Lp,2} with $p=\infty$, we have 
\begin{equation*}
    \begin{split}
        \max_{t\in (0,2\pi)}\varphi_{e_1,e_2}(t)
        \leq& \int_{\mathbb{S}^{d-1}}\int_0^{+\infty}\rho(x+er)r^{d-2}\,dr\,d\sigma(e)
        =\int_{\R^d} \frac{\rho(y)}{|x-y|}\,dy\leq c_dM^{1/d}.
    \end{split}
\end{equation*}
Therefore, it holds
\begin{equation}
\begin{split}
    \label{m_x estimate}
    \int_{E} \max_{(0,2\pi)}|\varphi_{e_1,e_2}(t)|^{1-\frac{d\alpha}{d-1}}\,d\mathcal H^{2d-3}(e_1,e_2)\leq  \int_{E} \left| c(d)\,M^{\frac{1}{d}}\right|^{1-\frac{d\alpha}{d-1}}d\mathcal H^{2d-3}(e_1,e_2)\leq c(d)\,M^{\frac{1}{d}-\frac{\alpha}{d-1}}.
    \end{split}
\end{equation}
Finally, using estimate \eqref{m_x estimate} in \eqref{Lip Est part 1}, we obtain
\begin{equation*}
    \left|Dv[\rho](x)+\frac{\mathrm{id}}{d}\right|\leq \frac{c(d)}{(1-\frac{d\alpha}{d-1})\kappa}\,M^{\frac{1}{d}},
\end{equation*}
which concludes the proof of Proposition \ref{proposition assumption A1}.
\end{proof}
\begin{proof}[Proof of Lemma \ref{AuxLemma}]
Here we prove the results contained in Lemma \ref{AuxLemma}:
    \begin{enumerate}
        \item Fix $(e_1,e_2)\in E, t \in [0,2\pi)$. For any $e \in \mathbb{S}^{d-1}$, define $f_{e,e_1,e_2}(t)\coloneqq e\cdot \theta(e_1,e_2,t)$. We have that
        \begin{equation*}
        \begin{split}
        \frac{d}{dt}\varphi_{e_1,e_2}(t)&=\frac{d}{dt}\int_{\{e\cdot e_2\geq 0\}}m_x(e) \chi_{(-\infty,0)}\circ f_{e,e_1,e_2}(t)\,d\sigma(e)\\
        &=-\int_{\{e\cdot e_2 \geq 0\}}m_x(e)\delta_{0}(f_{e,e_1,e_2}(t))(-e\cdot e_1\sin t  +e\cdot e_2\cos t )\,d\sigma(e)\\
    \end{split}
    \end{equation*}
    Imposing $f_{e,e_1,e_2}(t)=0$ (in order to have a non-zero integrand), we can write $\cos t,\sin t$ in terms of $e\cdot e_1, e\cdot e_2$, obtaining
        \begin{equation*}
            \left|\varphi'_{e_1,e_2}(t)\right|=\left|\int_{e\cdot\theta(e_1,e_2,t)=0\cap e\cdot e_2\geq 0}m_x(e)h(e\cdot e_1,e\cdot e_2)\,d\mathcal H^{d-2}(e)\right|
        \end{equation*}
        for the function $h=-e\cdot e_1\sin t+e\cdot e_2 \cos t$. 
        \item 
        Fix $t\in[0,2\pi)$ and $(e',\vartheta')\neq (e,\vartheta)$, with $e\cdot\vart=e'\cdot\vart'=0$. One can always find $R\in O(d)$ such that $R e=e'$ and $R\vartheta=\vartheta'$. Indeed, there always exists an isometry sending a given orthonormal basis into another. Define now $e'_1=Re_1$ and $e'_2=Re_2$; we deduce that $\theta(e_1',e_2',t)=R\theta(e_1,e_2,t)=R\vartheta=\vartheta'$. Therefore
        \begin{equation*}
        \begin{split}
            \int_{\{(e_1,e_2)\colon \theta(e_1,e_2,t)=\vartheta', e'\cdot e_2\geq 0\}}&h(e'\cdot e_1, e'\cdot e_2)\,d\mathcal H^{d-2}(e_1,e_2)=\\&=\int_{\{(e_1,e_2)\colon \theta(e_1,e_2,t)=\vartheta', e'\cdot e_2\geq 0 \}}h(e\cdot R^{-1}e_1,e\cdot R^{-1}e_2)\,d\mathcal H^{d-2}(e_1,e_2)\\
            &=\int_{\{(R^{-1}e_1,R^{-1}e_2)\colon \theta(R^{-1}e_1,R^{-1}e_2,t)=\vartheta,  e \cdot R^{-1}e_2 \geq 0\}}h(e\cdot R^{-1}e_1,e\cdot R^{-1}e_2)\,d\mathcal H^{d-2}(e_1,e_2)\\
            &=\int_{\{(e_1,e_2)\colon \theta(e_1,e_2,t)=\vartheta , e\cdot e_2 \geq 0\}}h(e\cdot e_1, e\cdot e_2)\,d\mathcal H^{d-2}(e_1,e_2).
        \end{split}
        \end{equation*}
    We also notice that the integral in the lines above is not zero: since $e\cdot e_1 \cos t+e\cdot e_2 \sin t=0$, recalling $h=-e\cdot e_1 \sin t+e\cdot e_2 \cos t$,it follows that 
        \begin{equation*}
            h(e\cdot e_1, e\cdot e_2)=\text{sgn}(\cos t)\sqrt{(e\cdot e_1)^2+ (e\cdot e_2)^2}.
        \end{equation*}
    \item We will identify any point $p \in E\times [0,2\pi)$ with a point $(e_1,e_2,t)\in \R^{d}\times \R^d\times[0,2\pi)$. For any $(v,w,a)\in T_{p}(E\times[0,2\pi))$ and for some $\delta>0$, consider a curve \begin{equation*}
    \begin{split}
    \gamma \colon (-\delta,\delta)&\to \R^{d}\times \R^d \times \R\\
    \tau &\mapsto (\gamma_1(\tau),\gamma_2(\tau),\gamma_3(\tau)),
    \end{split}
    \end{equation*}
    with the property that $\gamma(0)=p\in E\times [0,2\pi)$, $\gamma_1'(0)=v,\gamma'_2(0)=w$ and $\gamma_3'(0)=a $.
    Without loss of generality, we can assume that $e_1=(1,0,\ldots,0)$ and $e_2=(0,1,0,\ldots,0)$. 
    In order to compute $D_{(e_1,e_2,t)}\,\theta\,[(v,w,a)]$, with $(v,w,a)\in T_{(e_1,e_2,t)}(E\times [0,2\pi))$, we write
    
\[
\begin{aligned}
D\theta_{(e_1,e_2,t)}(v,w,a)
&=v\cos t+w\sin t+a(-e_1\sin t+e_2\cos t).
\end{aligned}
\]
Thus, the differential is represented
in ambient coordinates by the matrix
\[
D\theta=\begin{pmatrix}
\cos t\,\mathrm{id} &
\sin t\,\mathrm{id} &
\zeta(t)
\end{pmatrix},
\]
where
\[
\zeta(t)=(-\sin t,\cos t,0,\ldots,0)^T.
\]
 Let $\eta= -e_1\sin t+e_2\cos t$, and complete $(e_1,e_2)$ to an orthonormal basis $(e_1,\ldots,e_d)$. A basis for $T_{(e_1,e_2,t)}M$ is given by the following $2d-2$ orthonormal vectors: for $j =3,\ldots,d$, consider
    \begin{equation*}
        \begin{split}
            \xi_1\coloneqq \frac{(e_2,-e_1,0)}{\sqrt{2}},\quad \xi_2\coloneqq (0,0,1),\\\xi_j\coloneqq (e_j,0,0),\quad \widetilde{\xi_j}\coloneqq (0,e_j,0).
        \end{split}
    \end{equation*}
    We have $D\theta[\xi_1]=\eta/\sqrt{2}$, $D\theta[\xi_2]=\eta$,$D\theta [\xi_j]=\cos t e_j$, $D\theta [\widetilde{\xi_j}]=\sin t e_j$. Therefore, with respect to the orthonormal basis $(\eta, e_3,\ldots,e_d)$ which spans the tangent space to $\mathbb{S}^{d-1}$, we deduce

\begin{equation*}
    D\theta D\theta^*=\text{diag}\left(\frac{3}{2},1,\ldots,1\right),
\end{equation*}
and hence $J_{d-1}\theta=\sqrt{3/2}$.
    By the coarea formula (see \cite{maggi2012sets}), we have that for any integrable measurable $\phi \colon E\times[0,2\pi)\to \R$, it holds
    \begin{equation*}
        \int_{E\times [0,2\pi)}J_{\theta}(p,t)\phi(p,t) d\mathcal H^{2d-3}(p)dt=\int_{\mathbb{S}^{d-1}}\left(\int_{\theta^{-1}(\vartheta)}\phi(p,t)\,d\mathcal H^{d-1}(p,t)\right)d\sigma(\vartheta).
    \end{equation*}
    Taking \[\phi(p,t)=\frac{f(p,t)}{g\circ \theta (p,t)}\] we obtain the claim.
    \end{enumerate}
    This concludes the proof of Lemma \ref{AuxLemma}
\end{proof}
\section{Uniqueness of the SWF}
\label{S: Uniqueness of the SWF}
In this section we furnish the proof of Corollary \ref{uniqueness corollary} using Theorem \ref{Main theorem}. We also provide an alternative proof under the assumption that the projections of the target measure $\nu$ are bounded from below. Finally, we show a third argument which applies if $\nu$ is the measure having uniform projections: we stress again that this can happen only if $d=2,3$ as a consequence of Remark \ref{proiezioni uniformi}.

We first recall the differentiation formulas for Wasserstein distances that will be used later. These results are discussed in \cite{santambrogio2015optimal}, Chapter 5.
\begin{lemma}
\label{derivative of distance}
    Let $p\in[1,+\infty)$ and let $(\rho_i(t))_{t\geq0}$, $i=1,2$, be locally absolutely continuous curves in time with values in $(\mathscr{P}_p(\R^d),W_p)$, consisting of probability measures absolutely continuous with respect to Lebesgue measure and solving
    \begin{equation*}
        \partial_t\rho_i+\nabla\cdot(\rho_i w_i)=0.
    \end{equation*}
    Then $t\mapsto W_p^p(\rho_1(t),\rho_2(t))$ is locally absolutely continuous, and the following identities hold for almost every $t$.

    If $\varphi=\varphi_t$ is a Kantorovich potential from $\rho_1(t)$ to $\rho_2(t)$ for the cost $\frac 1p|x-y|^p$, then we have 
\begin{equation}\label{diff1}
        \frac{1}{p}\frac{d}{dt}W_p^p(\rho_1(t),\rho_2(t)) =\int \nabla\varphi\cdot w_1\,d\rho_1(t)+\int\nabla\varphi^c\cdot w_2\,d\rho_2(t).
    \end{equation}
        This can be re-written with transport maps.
    If $p>1$ and $T=T_t$ is the optimal transport map from $\rho_1(t)$ to $\rho_2(t)$ for the same cost, then
    \begin{equation*}
        \frac{1}{p}\frac{d}{dt}W_p^p(\rho_1(t),\rho_2(t))
        =\int |x-T(x)|^{p-2}(x-T(x))\cdot\bigl(w_1(t,x)-w_2(t,T(x))\bigr)\,d\rho_1(t)(x),
    \end{equation*}
    where $|z|^{p-2}z$ is understood to be zero at $z=0$.

    If $p=1$, we choose $\varphi$ to be $1$-Lipschitz, so that $\varphi^c=-\varphi$; for any optimal transport map $T$ for the cost $|x-y|$, we have
    \begin{equation*}
        \frac{d}{dt}W_1(\rho_1(t),\rho_2(t))
        =\int\nabla\varphi\cdot(w_1-w_2\circ T)\,d\rho_1(t).
    \end{equation*}
\end{lemma}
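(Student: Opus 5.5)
The plan is to prove the derivative formula by a duality (``envelope'') argument, and then to translate it into the transport-map forms using the structure of optimal plans.

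\textbf{Absolute continuity.} By the triangle inequality,
\[
|W_p(\rho_1(t),\rho_2(t))-W_p(\rho_1(s),\rho_2(s))|\le W_p(\rho_1(t),\rho_1(s))+W_p(\rho_2(t),\rho_2(s)).
\]
Since both curves are locally absolutely continuous in $(\mathscr P_p,W_p)$, the map $t\mapsto W_p(\rho_1(t),\rho_2(t))$ is locally absolutely continuous. It is also locally bounded, so $t\mapsto W_p^p$ is locally absolutely continuous, and hence differentiable for a.e. $t$. We also use that, by the standard characterization of absolutely continuous curves, we may take $w_i$ with $\int|w_i(t)|^p\,d\rho_i(t)=|\rho_i'|(t)^p\in L^1_{loc}$.

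\textbf{Envelope argument for \eqref{diff1}.} Fix $t_0$ and a Kantorovich potential $\varphi=\varphi_{t_0}$ for the cost $\frac1p|x-y|^p$. Define
\[
g(s)=\int\varphi\,d\rho_1(s)+\int\varphi^c\,d\rho_2(s).
\]
By duality, $g(s)\le \frac1p W_p^p(\rho_1(s),\rho_2(s))$ for every $s$, with equality at $s=t_0$. Suppose both functions are differentiable at $t_0$. Then $\frac1pW_p^p-g\ge 0$ attains its minimum at $t_0$, so the two derivatives coincide there. Using the continuity equations to differentiate $g$ gives exactly the right-hand side of \eqref{diff1}.

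The delicate part, which I expect to be the main obstacle, is justifying the differentiation of $g$. The function $\varphi$ is not a $C^1_c$ test function. It is only $c$-concave, hence locally Lipschitz, with $|\nabla\varphi(x)|\le C(1+|x|^{p-1})$ when the supports are controlled. I would proceed in three steps.
\begin{enumerate}
\item Use the continuity equation tested against cut-offs $\chi_R\varphi_\eps$ of mollifications. Pass to the limit using $|\nabla\varphi|\,|w_1|\in L^1(\rho_1\,dt)$, which holds by H\"older since $|\nabla\varphi|^{p'}\lesssim 1+|x|^p$. This yields, for all $s$,
\[
g(s)-g(t_0)=\int_{t_0}^s\Bigl(\int\nabla\varphi\cdot w_1\,d\rho_1(r)+\int\nabla\varphi^c\cdot w_2\,d\rho_2(r)\Bigr)dr.
\]
\item Use the one-sided inequalities: for $h>0$ the difference quotient of $\frac1pW_p^p$ is $\ge$ that of $g$, and for $h<0$ it is $\le$. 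This pins the derivative of $W_p^p$ at $t_0$ between averages of the integrand above.
\item Recover the value of the integrand at $r=t_0$. This requires $t_0$ to be a Lebesgue point of a function that itself depends on $t_0$ through $\varphi_{t_0}$. I would first treat compactly supported measures, in a fixed ball $B_R$, where the potentials are uniformly Lipschitz. There, stability of Kantorovich potentials and the Lebesgue-point property of $r\mapsto(w_i(r)\rho_i(r))$ as vector measures tested against a countable dense family of Lipschitz functions handle the issue. I would then remove the compact-support assumption by truncation and the moment bounds. If this becomes too heavy, the fallback is to cite the corresponding result in \cite[Chapter 5]{santambrogio2015optimal}, as the statement itself suggests.
\end{enumerate}

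\textbf{Transport-map forms.} For $p>1$, we have $\nabla\varphi(x)=|x-T(x)|^{p-2}(x-T(x))$ $\rho_1$-a.e., from $\varphi(x)+\varphi^c(T(x))=\frac1p|x-T(x)|^p$. Since $(x,T(x))$ realizes the minimum defining $\varphi^c$, we also get $\nabla\varphi^c(T(x))=-|x-T(x)|^{p-2}(x-T(x))$. Writing $\rho_2(t)=T_\#\rho_1(t)$, the second integral in \eqref{diff1} becomes $-\int|x-T|^{p-2}(x-T)\cdot w_2(T(x))\,d\rho_1$, which gives the stated formula. For $p=1$, with $\varphi$ $1$-Lipschitz and $\varphi^c=-\varphi$, the same change of variables gives $-\int\nabla\varphi(T(x))\cdot w_2(T(x))\,d\rho_1$. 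It then suffices to check that $\nabla\varphi(T(x))=\nabla\varphi(x)$ for $\rho_1$-a.e. $x$. Where $x\ne T(x)$, this holds because $\varphi$ is affine with slope $1$ along the transport ray $[x,T(x)]$, so at differentiability points both gradients equal the unit vector $(x-T(x))/|x-T(x)|$. The set where $T(x)=x$ is trivial.
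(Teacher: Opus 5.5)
Your proposal follows essentially the same route as the paper, which gives no argument of its own and defers to \cite[Chapter 5]{santambrogio2015optimal}. There the formula is obtained by exactly your envelope argument: freeze a Kantorovich potential at $t_0$, use $\int\varphi\,d\rho_1(s)+\int\varphi^c\,d\rho_2(s)\le\frac1pW_p^p(\rho_1(s),\rho_2(s))$ with equality at $s=t_0$, and then apply the identifications $\nabla\varphi=|x-T|^{p-2}(x-T)$ and $\nabla\varphi^c\circ T=-\nabla\varphi$, $\rho_1$-a.e. Your transport-ray argument giving $\nabla\varphi\circ T=\nabla\varphi$ for $p=1$ is also correct.

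Two of your technical steps are weaker than they sound, although the paper does not address them either.

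First, removing compact supports ``by truncation'' does not work as such. For unbounded supports the frozen potential $\varphi_{t_0}$ can behave like $-|x|^q$ with $q>p$, and then fail to be $\rho_1(s)$-integrable for every $s\neq t_0$. The general case in \cite{ambrosio2005gradient} relies instead on the first-order approximation $W_p\bigl(\rho_1(t_0+h),(\mathrm{id}+hw_1)_{\#}\rho_1(t_0)\bigr)=o(h)$ for the tangent velocity.

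Second, in the compact case you need uniformity with respect to the $t_0$-dependent potential. This is most easily obtained from Lebesgue points of $r\mapsto w_i(r)\rho_i(r)$ taken strongly in $L^1(\R^d)$, rather than from testing against a countable family of Lipschitz functions.
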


We now go on with some proofs of uniqueness for the Sliced Wasserstein Flow.

\begin{proof}[Proof of Corollary \ref{uniqueness corollary}]
   The existence of a solution is proved in \cite{bonnotte2013unidimensional} together with local (in time) $L^\infty$ bounds. 
   Let $(\rho_i(t))_{t\geq 0}$ for $i=1,2$ be two solutions of the sliced Wasserstein flow \eqref{SWF eq rho_0}, and let $T>0$. Let $M$ be a constant such that $\rho_i(t,x)\leq M$ for a.e. $x$ and every $t\in [0,T]$ and $i=1,2$. 
   By Theorem \ref{Main theorem} we know that, under assumption \eqref{assumption nu} $\mathscr{F}_\nu$ is $\lambda$-geodesically convex on $K=\{\rho\,:\, \rho\leq M\}$ for some $\lambda \in \R$. We then use formula \eqref{diff1} and a well-known estimate due to geodesic convexity (see \cite{SantSurvey}, Lemma 4.12), to obtain
   \begin{equation*}
       \frac{d}{dt}W^2_2(\rho_1(t),\rho_2(t))\leq -2\lambda W^2_2(\rho_1(t),\rho_2(t)),
   \end{equation*}. Uniqueness then follows by Gronwall inequality
   which proves $\rho_1(t)=\rho_2(t)$ if $\rho_1(0)=\rho_2(0)$ (see \cite{ambrosio2005gradient}).
\end{proof}

\begin{remark}
 We provide here an alternative proof of the uniqueness of the SWF,
which does not use Theorem \ref{Main theorem}, by computing the
variation of the distance $W_1$ between two solutions.
We assume that, for almost every $\vartheta\in\mathbb{S}^{d-1}$,
the support of $\nu^\vartheta$ is an interval $I_\vartheta$,
and that there exists $\kappa>0$, independent of $\vartheta$,
such that $\nu^\vartheta\geq \kappa$ almost everywhere on
$I_\vartheta$. The assumption on $\nu$ implies that it is compactly supported, and this uniqueness result also concerns compactly supported solutions. Let $\rho_1, \rho_2$ be compactly supported solutions of \eqref{SWF eq rho_0}. We will denote by $T,\varphi$ the optimal transport map from $\rho_1$ to $\rho_2$ and the associated Kantorovich potential, respectively. Moreover, we will denote by $T_i$ the optimal transport map from $\rho_i$ to $\nu$, and by $T_i^\vartheta$ the one from $\rho_i^\vartheta$ to $\nu^\vartheta$. By Lemma \ref{derivative of distance}, for almost every $t$,
    \begin{equation*}
    \begin{split}
        \partial_t W_1(\rho_1(t),\rho_2(t))=&\int \varphi(\partial_t \rho_1-\partial_t \rho_2)=\int \nabla \varphi\cdot(v[\rho_1]\rho_1-v[\rho_2]\rho_2)\\
        =&\int \nabla \varphi\cdot(v[\rho_1]-v[\rho_2]\circ T)\rho_1,
    \end{split}
    \end{equation*}
    Then, recalling that $\text{Lip}(\varphi)\leq 1$,
    \begin{equation}
    \label{remark W1 first}
        \begin{split}
            \partial_t W_1(\rho_1(t),\rho_2(t))=&\int \nabla \varphi\,\cdot \left(v[\rho_1]-v[\rho_2]\right)d\rho_1+\int \nabla\varphi\, \cdot \left(v[\rho_2]-v[\rho_2]\circ T\right)d\rho_1\\
            \leq& \int \fint_{\mathbb{S}^{d-1}} \left| T_1^{\vartheta}(x\cdot \vartheta)-T_2^\vartheta(x\cdot \vartheta)\right|d\rho_1(x)\,d\sigma(\vart)+\text{Lip}(v[\rho_2])\int|T(x)-x|d\rho_1.
        \end{split}
    \end{equation}
    Thanks to the assumption $\nu^\vart\geq \kappa$ (which in turn implies that $N^{[-1]}_\vart$ is $ \kappa^{-1}-$Lipschitz continuous for every $\vart$), we have
    \begin{equation}
    \label{remark W1 second}
    \begin{split}
        \left|T_{\vart,1}(x\cdot \vart)-T_{\vart,2}(x\cdot \vart)\right|&=\left|N_\vart^{[-1]}(R_{1,\vart}(x\cdot \vart))-N_\vart^{[-1]}(R_{2,\vart}(x\cdot \vart))\right|\\
        &\leq \frac{1}{\kappa}  \left|R_{1,\vart}(x\cdot \vart)-R_{2,\vart}(x\cdot \vart)\right|.\\
        \end{split}
    \end{equation}
    We recall that, for any $\mu,\gamma \in \mathscr{P}_2(\R)$ it holds
    \begin{equation}
    \label{W_1 dimensione 1}
        W_1(\mu,\gamma)=\int_0^1|R^{-1}_{\mu}(t)-R^{-1}_{\gamma}(t)|dt= \int_{\R}|R_\mu(t)-R_\gamma(t)|\,dt.   
    \end{equation}
    Moreover, since $\rho_1 \in L^\infty$ and it is compactly supported, we can provide an upper bound to $\rho_1^\vart$.
     Combining \eqref{remark W1 first}, \eqref{remark W1 second} and \eqref{W_1 dimensione 1}, together with the fact that $\rho^\vart_1\in L^\infty$, we obtain
    \begin{equation*}
    \begin{split}
        \partial_t W_1(\rho_1(t),\rho_2(t))\leq&\frac{1}{\kappa}\fint_{\mathbb{S}^{d-1}} \int_{\R} |R_{1,\vart}(r)-R_{2,\vart}(r)|\,d\rho_1^{\vart}(r)\,d\sigma(\vart)+\text{Lip}(v)W_1(\rho_1(t),\rho_2(t))\\
        \leq &\frac{C}{\kappa}W_1(\rho_1(t),\rho_2(t))
    \end{split}
    \end{equation*}
for some constant $C>0$ that is fixed once $T>0$ is. By Gronwall lemma, we deduce again the uniqueness of SWF.
\end{remark}



\begin{remark}
As shown in \cite[Section 4.1]{cozzi2025long}, if in \eqref{SWF eq rho_0} both the initial datum $\rho_0$ and the target measure $\nu$ have bounded support, then the solution $\rho_t$ also has this property for any $t\geq 0$. Under this assumption, and if the target measure $\nu$ has uniform projections (namely, $\nu^{\vartheta}(r)=\chi_{[-1/2,1/2]}(r)$ for any $\vartheta\in \mathbb{S}^{d-1}$), then the uniqueness of solutions to \eqref{SWF eq rho_0} can be proved following a strategy presented in \cite{lagoutiere2024vanishing}. Indeed, repeating verbatim the computations made in the proof of \cite[Theorem 4.1]{lagoutiere2024vanishing}, for any $\rho_1,\rho_2$ compactly supported solutions of the SWF with target measure $\nu$, and for $p\geq 1$, the following estimate holds. Here we fix a finite time interval $[0,T]$ and choose $R>0$ so that the supports of both solutions lie in $B_R$ throughout this interval; $\text{Lip}_{B_R}$ denotes the Lipschitz constant on this ball.
\begin{equation*}
    \frac{1}{p}\frac{d}{dt}W_p^p(\rho_1(t),\rho_2(t))
    \leq \underbrace{\text{Lip}_{B_R}(v[\rho_1])\int|T_t(x)-x|^p\,d\rho_1}_{\coloneqq I_1}+\underbrace{W_p(\rho_1,\rho_2)^{p-1}\left(\int|v[\rho_1]-v[\rho_2]|^p(T_t(x))\,d\rho_1\right)^{1/p}}_{\coloneqq I_2}.
\end{equation*}
Then, for $I_1$, recalling $W(z)=|z|$ and setting $a_d=\omega_{d-1}/(d\omega_d)$, we have
\begin{equation*}
\begin{split}
    \text{Lip}_{B_R}(v[\rho_1(t)])
    &\leq \frac1d+a_d\|D^2W*\rho_1(t)\|_{L^\infty(B_R)}\\
    &\leq \frac1d+a_d\|D^2W\|_{L^{p_0}(B_{2R})}\|\rho_1(t)\|_{L^{p_0'}}.
\end{split}
\end{equation*}
On the other hand, for $I_2$, following again the argument presented in \cite{lagoutiere2024vanishing}, we have that for any $q\geq 1$ it holds
\begin{equation*}
    I_2 \leq a_d W^p_p(\rho_1,\rho_2)\|D^2 W\|_{L^{pq'}(B_{2R})}\|\rho_2(t)\|^{\frac{1}{p}}_{L^q}.
\end{equation*}
Choose $1\leq p<p_0<d$ and set $q=p_0/(p_0-p)$, so that $pq'=p_0$. We deduce
\begin{equation*}
\begin{split}
    \frac1p\frac{d}{dt}W_p^p(\rho_1(t),\rho_2(t))
    &\leq I_1+I_2\\
    &\leq W_p^p(\rho_1(t),\rho_2(t))\left[\frac1d+a_d\|D^2W\|_{L^{p_0}(B_{2R})}\left(\|\rho_1(t)\|_{L^{p_0'}}+\|\rho_2(t)\|_{L^q}^{1/p}\right)\right].
\end{split}
\end{equation*}
By \eqref{W} we observe that $|D^2W|\lesssim |z|^{-1}\in L^{p_0}_{\text{loc}}$ for any $p_0<d$, hence $p_0'>d'$.
Applying again the Gronwall lemma, and thanks to Theorem \ref{stime Bonnotte Lp}, we can conclude that $\rho_1=\rho_2$.
\end{remark}

\section*{Acknowledgements}

This work was supported by the European Union via the ERC AdG 101054420 EYAWKAJKOS project and by the ARQUS alliance, through the support of a collaborative project between the teams in Lyon and Padua.

\section*{AI statement}

In this work, LLM models have been used to compute some exact integral values throughout the proof, and to proofread the final manuscript.

\printbibliography
\end{document}